\newcommand{\arxiv}[1]{\href{http://arxiv.org/abs/#1}{\texttt{arXiv:#1}}}
\theoremstyle{definition}
\newtheorem{defn}{Definition}[section]
\newtheorem{ex}[defn]{Example}
\newtheorem{rem}[defn]{Remark}
\theoremstyle{plain}
\newtheorem{lem}[defn]{Lemma}
\newtheorem{thm}[defn]{Theorem}
\newtheorem{prop}[defn]{Proposition}
\numberwithin{equation}{section}
\newlength{\BiblioSpacing}
\renewenvironment{thebibliography}[1]{%
\begin{oldthebibliography}{#1}%
\setlength{\parskip}{\BiblioSpacing}
\setlength{\itemsep}{\BiblioSpacing}
}%
{%
\end{oldthebibliography}%
}
\title{On relative $t$-designs in\\ polynomial association schemes}
\author{Eiichi Bannai\\
\small Department of Mathematics\\[-0.8ex]
\small Shanghai Jiao Tong University\\[-0.8ex] 
\small Shanghai 200240, China\\
\small\tt bannai@sjtu.edu.cn\\
\and
Etsuko Bannai\\
\small Misakigaoka 2-8-21, Itoshima 819-1136, Japan\\
\small\tt et-ban@rc4.so-net.ne.jp\\
\and
Sho Suda\\
\small Department of Mathematics Education\\[-0.8ex]
\small Aichi University of Education\\[-0.8ex] 
\small Kariya 448-8542, Japan\\
\small\tt suda@auecc.aichi-edu.ac.jp\\
\and
Hajime Tanaka\\
\small Research Center for\\[-0.8ex]
\small Pure and Applied Mathematics\\[-0.8ex]
\small Graduate School of Information Sciences\\[-0.8ex]
\small Tohoku University\\[-0.8ex] 
\small Sendai 980-8579, Japan\\
\small\tt htanaka@m.tohoku.ac.jp\\
}
\date{
\small Mathematics Subject Classifications: 05E30, 05B30}
\begin{document}
\maketitle

\begin{abstract}
Motivated by the similarities between the theory of spherical $t$-designs and that of $t$-designs in $Q$-polynomial association schemes, we study two versions of \emph{relative $t$-designs}, the counterparts of Euclidean $t$-designs for $P$- and/or $Q$-polynomial association schemes.
We develop the theory based on the \emph{Terwilliger algebra}, which is a noncommutative associative semisimple $\mathbb{C}$-algebra associated with each vertex of an association scheme.
We compute explicitly the Fisher type lower bounds on the sizes of relative $t$-designs, assuming that certain irreducible modules behave nicely.
The two versions of relative $t$-designs turn out to be equivalent in the case of the Hamming schemes.
From this point of view, we establish a new algebraic characterization of the Hamming schemes.

\bigskip\noindent \textbf{Keywords:} Relative $t$-design; Fisher type inequality; Terwilliger algebra
\end{abstract}

\section{Introduction}

Design theory is concerned with finding ``good'' finite sets that ``approximate globally'' their underlying spaces (often) having strong symmetry/regularity, such as the Euclidean space $\mathbb{R}^n$, the unit sphere $S^{n-1}\subseteq\mathbb{R}^n$, and the set of $k$-subsets of a given $v$-set.
It has therefore a vast range of applications in various fields of science.
See, e.g., \cite{CD2007B,BB2009EJC}.

The similarities between the theories of spherical $t$-designs and combinatorial $t$-$(v,k,\lambda)$ designs are well known; cf.~\cite{DGS1977GD,Delsarte1978SIAM,Godsil1993B,BB1999B}.
Historically, the concept of spherical $t$-designs was introduced by Delsarte, Goethals, and Seidel \cite{DGS1977GD} as a continuous analogue of that of $t$-designs in $Q$-polynomial association schemes due to Delsarte \cite{Delsarte1973PRRS,Delsarte1976JCTA}.
(Combinatorial $t$-$(v,k,\lambda)$ designs are precisely the $t$-designs in the Johnson scheme $J(v,k)$.)
It was then generalized to the concept of Euclidean $t$-designs by Neumaier and Seidel \cite{NS1988IM}, and Euclidean $t$-designs quickly became an active area of research; cf.~\cite{BB2009EJC}.
Although the counterparts of Euclidean $t$-designs in the theory of $Q$-polynomial association schemes were already defined and discussed to some extent by Delsarte \cite{Delsarte1977PRR} (cf.~\cite{BB2012JAMC}) much earlier as \emph{relative $t$-designs}, it seems that the theory of the latter has not been fully developed yet (except in the case of the binary Hamming scheme $H(n,2)$, in which case relative $t$-designs turn out to be equivalent to \emph{regular $t$-wise balanced designs}).
This paper is a contribution to this theory.
Our discussions also include a concept of relative $t$-designs in general $P$-polynomial association schemes as well, following Delsarte and Seidel \cite{DS1989LAA}.

We refer the reader to \cite{Delsarte1973PRRS,BI1984B,BCN1989B,Godsil1993B,MT2009EJC,DKT2014pre}, etc., for the background on association schemes and some fundamental concepts.
Throughout the paper, let $\mathfrak{X}=(X,\{R_r\}_{r=0}^d)$ be a (symmetric) $d$-class association scheme, and fix a base vertex $u_0\in X$.
Let $X_r=\{x\in X\, |\, (u_0,x)\in R_{r}\}$ for $r=0,1,\dots,d$.
We call $X_0,X_1,\dots,X_d$ the \emph{shells} of $\mathfrak{X}$.
Let $\mathcal{F}(X)$ be the vector space consisting of all the real valued functions on $X$.
In the following arguments we often identify $\mathcal{F}(X)$ with the vector space $\mathbb R^{X}$ consisting of the real column vectors with coordinates indexed by $X$.

We first introduce a concept of $t$-designs for general $P$-polynomial association schemes.
Suppose that $\mathfrak{X}$ is $P$-polynomial with respect to the ordering $R_0,R_1,\dots,R_d$.
In the study of spherical/Euclidean $t$-designs in $\mathbb{R}^n$, we work with the vector space of polynomials in $n$ variables, in particular with the subspaces of homogeneous polynomials.
For the $P$-polynomial scheme $\mathfrak{X}$, it is natural to consider the following subspaces of $\mathcal{F}(X)$.
For every $z\in X_j$, we define $f_z\in \mathcal{F}(X)$ by
\begin{equation*}
	f_z(x)=\begin{cases}	1, & \text{if} \ x\in X_i, \ i\geq j, \ \text{and} \ (x,z)\in R_{i-j}, \\ 0, & \text{otherwise}, \end{cases} \qquad (x\in X).
\end{equation*}
In other words, $f_z(x)=1$ if and only if $z$ lies on a geodesic between $u_0$ and $x$ in the corresponding distance-regular graph $(X,R_1)$.
Let $\operatorname{Hom}_j(X)=\operatorname{span}\{ f_z\, |\, z\in X_j\}$ $(j=0,1,\dots,d)$.
Then,
\begin{equation*}
	\dim (\operatorname{Hom}_j(X))=|X_j|=:k_j \quad (j=0,1,\dots,d),
\end{equation*}
and we have the following direct sum decomposition of $\mathcal{F}(X)$:
\begin{equation*}
	\mathcal{F}(X)=\operatorname{Hom}_0(X) + \operatorname{Hom}_1(X) + \dots + \operatorname{Hom}_d(X).
\end{equation*}

We now consider a (positive) weighted subset $(Y,w)$ of $X$, that is to say, a pair of a subset $Y$ of $X$ and a function $w:Y\rightarrow (0,\infty)$.
Let $\{r_1,r_2,\dots,r_p\}=\{r\,|\, Y\cap X_r\ne\emptyset\}$, and let $Y_{r_i}=Y\cap X_{r_i}$, $w(Y_{r_i})=\sum_{y\in Y_{r_i}}w(y)$ for $i=1,2,\dots,p$.
We say that $Y$ is \emph{supported} by the union $S=X_{r_1}\cup X_{r_2}\cup\dots\cup X_{r_p}$ of $p$ shells.
For any subspace $R(X)$ of $\mathcal{F}(X)$, we write $R(S)=\{ f|_S\, |\, f\in R(X)\}$.

\begin{defn}[$P$-polynomial case]\label{design-P}
A weighted subset $(Y, w)$ of $X$ is a \emph{relative $t$-design of $\mathfrak{X}$ with respect to $u_0$} if  
\begin{equation*}
	\sum_{i=1}^p\frac{w(Y_{r_i})}{k_{r_i}} \sum_{x\in X_{r_i}}f(x)=\sum_{y\in Y}w(y)f(y)
\end{equation*}
for every $f\in \operatorname{Hom}_0(X)+\operatorname{Hom}_1(X)+\dots+\operatorname{Hom}_t(X)$.
\end{defn}

\noindent
This definition is due to Delsarte and Seidel \cite[Section 6]{DS1989LAA} for the binary Hamming scheme $H(d,2)$.
In this paper, we mostly consider the case $t=2e$ for simplicity.

\begin{thm}[{\cite{DS1989LAA}}]\label{Fisher-P}
Let $(Y, w)$ be a relative $2e$-design of a Hamming scheme $H(d,q)$ with respect to $u_0$ in the sense of Definition \ref{design-P}.
Let $S=X_{r_1}\cup\dots\cup X_{r_p}$ be the union of the shells which support $Y$.
Then,
\begin{equation}\label{Fisher-P-Hom}
	|Y|\geq \dim\big(\!\operatorname{Hom}_{0}(S)+\operatorname{Hom}_{1}(S)+\dots+\operatorname{Hom}_{e}(S)\big).
\end{equation}
\end{thm}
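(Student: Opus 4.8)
The plan is to run the standard linear-algebra (``Fisher-type'') argument used by Delsarte and Seidel \cite{DS1989LAA}: show that the evaluation map $f\mapsto(f(y))_{y\in Y}$ is injective on the space $\mathrm{Hom}_{0}(S)+\cdots+\mathrm{Hom}_{e}(S)$, so that this space has dimension at most $\dim\mathbb{R}^{Y}=|Y|$. Throughout, write $\mathcal{H}_{m}=\mathrm{Hom}_{0}(X)+\cdots+\mathrm{Hom}_{m}(X)$; since restriction is linear, $\mathcal{H}_{m}(S)=\mathrm{Hom}_{0}(S)+\cdots+\mathrm{Hom}_{m}(S)$.

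The one place where the Hamming structure is used is the \emph{multiplicative closure} $\mathcal{H}_{e}\cdot\mathcal{H}_{e}\subseteq\mathcal{H}_{2e}$ under the pointwise product of functions on $X$. To see this, identify $H(d,q)$ with $\mathbb{Z}_{q}^{d}$ and take $u_{0}=0$, so that $X_{j}$ is the set of words of Hamming weight $j$. The geodesic description of $f_{z}$ shows that, for $z\in X_{j}$, $f_{z}$ is the indicator function of the subcube $\{x\in X\mid x_{i}=z_{i}$ for all $i\in\mathrm{supp}(z)\}$ (note $z_i\neq0$ on $\mathrm{supp}(z)$). Hence, for $z\in X_{j}$ and $z'\in X_{j'}$, the product $f_{z}f_{z'}$ is the indicator of the intersection of two subcubes: it is $0$ if $z$ and $z'$ disagree somewhere on $\mathrm{supp}(z)\cap\mathrm{supp}(z')$, and otherwise it equals $f_{z''}$, where $z''$ is the word with $\mathrm{supp}(z'')=\mathrm{supp}(z)\cup\mathrm{supp}(z')$ agreeing with $z$ and $z'$ there; so $f_{z}f_{z'}\in\mathrm{Hom}_{m}(X)$ with $m=|\mathrm{supp}(z)\cup\mathrm{supp}(z')|\le j+j'\le 2e$. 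Taking linear combinations gives the claimed inclusion.

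Next, suppose $f\in\mathcal{H}_{e}$ vanishes on $Y$; I claim it vanishes on all of $S$. Since $f^{2}\in\mathcal{H}_{2e}$ by the closure property, Definition \ref{design-P} with $t=2e$ applied to $f^{2}$ gives
\begin{equation*}
	\sum_{i=1}^{p}\frac{w(Y_{r_{i}})}{k_{r_{i}}}\sum_{x\in X_{r_{i}}}f(x)^{2}=\sum_{y\in Y}w(y)f(y)^{2}=0.
\end{equation*}
Because $w$ is positive and each $Y_{r_{i}}=Y\cap X_{r_{i}}$ is nonempty, every coefficient $w(Y_{r_{i}})/k_{r_{i}}$ is strictly positive, while every inner sum is nonnegative; hence each inner sum vanishes, i.e.\ $f\equiv 0$ on every $X_{r_{i}}$, so $f\equiv 0$ on $S$.

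Finally, let $\varrho\colon\mathcal{H}_{e}\to\mathbb{R}^{Y}$ be the evaluation map. Since $Y\subseteq S$, its kernel contains $\{f\in\mathcal{H}_{e}\mid f|_{S}=0\}$, and the previous paragraph gives the reverse inclusion; thus $\ker\varrho$ equals the kernel of the restriction map $\mathcal{H}_{e}\to\mathcal{H}_{e}(S)$, and by the rank-nullity theorem
\begin{equation*}
	\dim\big(\mathrm{Hom}_{0}(S)+\cdots+\mathrm{Hom}_{e}(S)\big)=\dim\mathcal{H}_{e}-\dim\ker\varrho=\dim(\mathrm{im}\,\varrho)\le|Y|,
\end{equation*}
which is \eqref{Fisher-P-Hom}. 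I expect the multiplicative-closure step to be the main obstacle: it encodes the ``polynomial''/affine nature of the Hamming scheme and is precisely what makes $f^{2}$ admissible in Definition \ref{design-P}; it fails for a general $P$-polynomial scheme, which is why the theorem is confined to $H(d,q)$. The remaining points to verify are routine — for instance that $X_{0}=\{u_{0}\}$, so $\mathbf{1}\in\mathrm{Hom}_{0}(X)\subseteq\mathcal{H}_{e}$, and that the numbers $w(Y_{r_{i}})$ are genuinely positive.
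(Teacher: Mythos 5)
Your proof is correct, and it is precisely the Fisher-type argument of Delsarte and Seidel that the theorem cites: multiplicative closure $\bigl(\mathrm{Hom}_0(X)+\cdots+\mathrm{Hom}_e(X)\bigr)\cdot\bigl(\mathrm{Hom}_0(X)+\cdots+\mathrm{Hom}_e(X)\bigr)\subseteq \mathrm{Hom}_0(X)+\cdots+\mathrm{Hom}_{2e}(X)$, then apply the design identity to $f^2$ and use positivity of the weights to force $f|_S\equiv 0$, then compare dimensions via the injective restriction map. Your verification of the closure step via the subcube description of $f_z$ is sound (the paper confirms that description in the proof of Proposition \ref{Hom-L}), and it is the exact analogue of Proposition \ref{a5}, which the paper uses in Appendix \ref{sec: dual-polar} to run the same argument (Theorem \ref{a6}) for dual polar schemes. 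The only difference from the paper's in-text treatment is that, for $H(d,q)$, the paper does not write this argument out: it cites \cite{DS1989LAA} with the remark that the $H(d,2)$ proof extends verbatim, and also offers a second route, deducing Theorem \ref{Fisher-P} from the general $Q$-polynomial bound of Theorem \ref{Fisher-Q} combined with Proposition \ref{Hom-L} (which identifies $\sum_{j\le t}\mathrm{Hom}_j(X)$ with $\sum_{j\le t}L_j(X)$, so the two design notions and the two restricted spaces coincide). Your route is self-contained and elementary, relying only on the combinatorial structure of $H(d,q)$; the paper's alternative buys the result as a corollary of machinery it needs anyway, at the cost of invoking Theorem \ref{Fisher-Q}. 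Both are valid, and your closure lemma is indeed the point where the Hamming (or, in the appendix, dual polar) structure enters and where a general $P$-polynomial scheme would fail.
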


\noindent
Delsarte and Seidel \cite{DS1989LAA} proved Theorem \ref{Fisher-P} only for $H(d,2)$, but their proof works for general $q$.
Theorem \ref{Fisher-P} also follows from Theorem \ref{Fisher-Q} and Proposition \ref{Hom-L} below.
Recently, Xiang \cite{Xiang2012JCTA} succeeded in determining the right hand side of \eqref{Fisher-P-Hom} explicitly for $H(d,2)$, which was left open in \cite{DS1989LAA}.
Namely, he proved 
\begin{equation}\label{Fisher-P-exact}
	\dim\big(\!\operatorname{Hom}_{0}(S)+\operatorname{Hom}_{1}(S)+\dots+\operatorname{Hom}_{e}(S)\big) =k_e+k_{e-1}+\dots + k_{e-p+1},
\end{equation}
under a reasonable additional condition which avoids the triviality.
In this paper, we focus on generalizing \eqref{Fisher-P-exact} to other classes of $P$-polynomial association schemes (without necessarily reference to Theorem \ref{Fisher-P} itself).
In Appendix \ref{sec: dual-polar}, we do, however, show that Theorem \ref{Fisher-P} is valid for dual polar schemes as well.

The concept of relative $t$-designs for $Q$-polynomial association schemes was introduced by Delsarte \cite{Delsarte1977PRR}.
We now recall the definition.
Suppose that $\mathfrak{X}$ is $Q$-polynomial with respect to the ordering $E_0,E_1,\dots,E_d$ of its primitive idempotents, and let $L_j(X)(\subseteq \mathcal{F}(X))$ be the column space of $E_j$ ($j=0,1,\dots,d$).
Then,
\begin{equation*}
	\dim (L_j(X))=\operatorname{rank}(E_j)=:m_j \quad (j=0,1,\dots,d),
\end{equation*}
and we have the following orthogonal direct sum decomposition of $\mathcal{F}(X)$:
\begin{equation*}
	\mathcal{F}(X)=L_0(X)\perp L_1(X)\perp \dots\perp L_d(X).
\end{equation*}

\begin{defn}[$Q$-polynomial case]\label{design-Q}
A weighted subset $(Y,w)$ of $X$ is \emph{a relative $t$-design of $\mathfrak{X}$ with respect to $u_0$} if
\begin{equation*}
	\sum_{i=1}^p \frac{w(Y_{r_i})}{k_{r_i}} \sum_{x\in X_{r_i}} f(x) =\sum_{y\in Y}w(y)f(y)
\end{equation*}
for every $f\in L_0(X)\perp L_1(X)\perp \dots\perp L_{t}(X)$.
\end{defn}

\noindent
Bannai and Bannai \cite{BB2012JAMC} obtained the following Fisher type inequality for general $Q$-polynomial association schemes:

\begin{thm}[{\cite{BB2012JAMC}}]\label{Fisher-Q}
Let $(Y,w)$ be a relative $2e$-design of the $Q$-polynomial scheme $\mathfrak{X}$ with respect to $u_0$ in the sense of Definition \ref{design-Q}.
Let $S=X_{r_1}\cup\dots\cup X_{r_p}$ be the union of the shells which support $Y$.
Then,
\begin{equation}\label{Fisher-Q-L}
	|Y|\geq \dim\big(L_0(S)+L_1(S)+\dots+L_e(S)\big).
\end{equation}
\end{thm}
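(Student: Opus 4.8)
The plan is to run the standard linear-algebra argument that underlies Fisher type inequalities, adapted to the weighted, multi-shell setting. Set $V=L_0(S)+L_1(S)+\cdots+L_e(S)$, a subspace of $\mathcal{F}(S)$, and put $m=\dim V$. I want to produce $m$ linearly independent vectors in $\mathbb{R}^Y$, which forces $|Y|\geq m$. The natural candidates are the evaluation functionals: for $f\in V$ let $\mathrm{ev}_Y(f)=(f(y))_{y\in Y}\in\mathbb{R}^Y$, where $f$ is first viewed as a function on $X$ (any extension; since $f$ already lives on $S\supseteq Y$ this is canonical). So it suffices to show that the linear map $\mathrm{ev}_Y\colon V\to\mathbb{R}^Y$ is injective.

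The key step is to show injectivity by exhibiting a bilinear pairing on $V$ that is both nondegenerate (positive definite) and computable through $Y$ alone. Concretely, for $f,g\in L_0(X)\perp\cdots\perp L_e(X)$ the product $fg$ lies in $L_0(X)\perp\cdots\perp L_{2e}(X)$ because the $Q$-polynomial property means the Krein parameters $q_{ij}^k$ vanish for $k>i+j$; hence, restricting to $S$, $(fg)|_S$ lies in the space $\mathrm{span}\{h|_S : h\in L_0(X)\perp\cdots\perp L_{2e}(X)\}$ to which the relative $2e$-design condition of Definition \ref{design-Q} applies. Therefore, for all $f,g\in V$,
\begin{equation*}
	\sum_{y\in Y}w(y)f(y)g(y)=\sum_{i=1}^p\frac{w(Y_{r_i})}{k_{r_i}}\sum_{x\in X_{r_i}}f(x)g(x).
\end{equation*}
The right-hand side defines a symmetric bilinear form $\langle f,g\rangle_S$ on $\mathcal{F}(S)$ which is positive definite on $V$: if $\langle f,f\rangle_S=0$ then $f$ vanishes on every shell $X_{r_i}$ supporting $Y$, i.e.\ $f|_S=0$, so $f=0$ as an element of $\mathcal{F}(S)$. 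Here one uses that each weight $w(Y_{r_i})$ is strictly positive (since $Y_{r_i}\neq\emptyset$ and $w>0$). Consequently, if $\mathrm{ev}_Y(f)=0$ then the left-hand side vanishes, so $\langle f,f\rangle_S=0$, so $f=0$ in $\mathcal{F}(S)$, i.e.\ $\mathrm{ev}_Y$ is injective on $V$. This gives $|Y|=\dim\mathbb{R}^Y\geq\dim V$, which is \eqref{Fisher-Q-L}.

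The main obstacle is the bookkeeping in the claim that $(fg)|_S$ lies in the span of restrictions of functions in $L_0(X)\perp\cdots\perp L_{2e}(X)$, and that the design condition is genuinely applicable to such restricted functions. One must be careful that Definition \ref{design-Q} is phrased for $f\in L_0(X)\perp\cdots\perp L_t(X)$ as functions on $X$, whereas $V$ consists of restrictions to $S$; the point is that the identity in Definition \ref{design-Q} depends only on the values on the shells $X_{r_1},\dots,X_{r_p}$, so it descends to a well-defined statement about $\mathcal{F}(S)$, and different extensions of an element of $V$ to $X$ do not matter. A secondary subtlety is ensuring the product $fg$ of two elements of $V$ (functions on $S$) can be written as $h|_S$ with $h$ a product of the chosen $X$-extensions lying in the correct Krein-bounded subspace — this is immediate once one fixes, for each $f\in V$, an extension in $L_0(X)\perp\cdots\perp L_e(X)$ and notes that products of such extensions are controlled by the vanishing of $q_{ij}^k$ for $k>i+j$. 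Modulo this routine but essential verification, the argument is the classical one, and it is worth remarking that the same scheme yields Theorem \ref{Fisher-P} with $L_j$ replaced by $\mathrm{Hom}_j$ once the analogous product-closure $\mathrm{Hom}_i(X)\cdot\mathrm{Hom}_j(X)\subseteq\sum_{k\leq i+j}\mathrm{Hom}_k(X)$ is in hand.
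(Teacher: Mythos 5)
Your argument is correct and is essentially the same as the proof in the cited source \cite{BB2012JAMC} (and as the paper's own Appendix~\ref{sec: dual-polar} proof of the $\mathrm{Hom}$-analogue, Theorem~\ref{a6}): apply the relative $2e$-design condition to $f^2\in L_0(X)+\cdots+L_{2e}(X)$, using $q_{ij}^k=0$ for $k>i+j$, to conclude that the restriction map $L_0(S)+\cdots+L_e(S)\rightarrow\mathcal{F}(Y)$ is injective and compare dimensions. The extension/restriction bookkeeping you flag is handled exactly as you describe, so there is no gap.
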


\noindent
As in the case of \eqref{Fisher-P-Hom}, it was not easy to compute the right hand side of \eqref{Fisher-Q-L} explicitly.
The initial attempt was made by Li, Bannai, and Bannai \cite{LBB2013GC} for $H(d,2)$, but was unsuccessful in general. 
Then, this attempt lead Xiang to obtain a successful result in the general case for $H(d,2)$, as it is known that the two definitions of relative $t$-designs are essentially equivalent for $H(d,2)$.
Namely, both definitions are shown to be equivalent to the geometric definition of relative $t$-designs coming from the structure of the regular semilattice associated with $H(d,2)$; cf.~\cite{Delsarte1977PRR}.
The equivalence of Definition \ref{design-P} for $H(d,2)$ with the definition of regular $t$-wise balanced designs was pointed out by Delsarte and Seidel \cite[Theorem 6.2]{DS1989LAA}, whereas the equivalence of Definition \ref{design-Q} for $H(d,2)$ with the geometric definition of relative $t$-designs was established by Delsarte \cite[Theorem 9.8]{Delsarte1977PRR} (see also \cite{BBI2013B}).
However, we note that

\begin{prop}\label{Hom-L}
If $\mathfrak{X}$ is a Hamming scheme $H(d,q)$, then for $t=0,1,\dots,d$,
\begin{equation}\label{Hom_e-L_e}
	\operatorname{Hom}_0(X)+\operatorname{Hom}_1(X)+\dots +\operatorname{Hom}_t(X)=L_0(X)+L_1(X)+\dots +L_t(X).
\end{equation}
\end{prop}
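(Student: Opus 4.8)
The plan is to prove the two filtered subspaces coincide by showing, for each $j=0,1,\dots,d$, that $\mathrm{Hom}_j(X)$ is contained in $L_0(X)+L_1(X)+\cdots+L_j(X)$, and that the dimensions of the partial sums agree. Since $\dim(\mathrm{Hom}_j(X))=k_j$ and both sides of \eqref{Hom_e-L_e} for $t$ versus $t-1$ differ by subspaces of the respective complements, a clean way is to prove the stronger statement that the ``$\mathrm{Hom}$'' filtration and the ``$L$'' filtration have the same members. First I would recall the explicit description of the eigenspaces of the Hamming scheme $H(d,q)$: identifying $X=\mathbb{Z}_q^d$ (or an abelian group of order $q$ raised to the $d$), $L_j(X)$ is spanned by the characters $\chi$ whose ``weight'' (number of nonzero coordinates of the defining vector) equals $j$, and $\mathcal{F}(X)$ is graded accordingly. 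Equivalently, using the coordinate functions, $L_0(X)+\cdots+L_t(X)$ is precisely the space of functions on $\mathbb{Z}_q^d$ that are polynomials of degree $\le t$ in suitable coordinate-indicator variables — this is the standard fact that the Bose–Mesner algebra action makes $H(d,q)$ a ``polynomial'' scheme of degree $d$.

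The key step is then to identify $\mathrm{Hom}_j(X)$ concretely. Fix the base vertex $u_0=(0,\dots,0)$. For $z\in X_j$, the support of $z$ is a set $T\subseteq\{1,\dots,d\}$ of size $j$ together with a choice of nonzero entry in each of those coordinates. The function $f_z$ picks out those $x$ lying on a geodesic from $u_0$ to $x$ through $z$, which for $H(d,q)$ means exactly that $x$ agrees with $z$ on $T$ (and is arbitrary off $T$). Thus $f_z=\prod_{i\in T}\mathbf{1}[x_i=z_i]$, a product of $j$ coordinate-indicator functions. Hence $\mathrm{Hom}_j(X)$ is spanned by all such degree-$j$ monomials in the indicator variables, and $\mathrm{Hom}_0(X)+\cdots+\mathrm{Hom}_t(X)$ is the span of all monomials of degree $\le t$ in these variables — that is, exactly the degree-$\le t$ part of $\mathcal{F}(X)$ in the coordinate grading.

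It therefore remains to match the two gradings: I would verify that the degree-$\le t$ subspace spanned by products of at most $t$ coordinate-indicators equals $L_0(X)+\cdots+L_t(X)$. One direction follows because each coordinate-indicator $\mathbf{1}[x_i=a]$ lies in $L_0(X)+L_1(X)$ (it is a constant plus a combination of weight-one characters in the $i$-th coordinate), and $L_0(X)+\cdots+L_s(X)$ is closed under multiplication by $L_0(X)+L_1(X)$ pushing the index up by at most $1$ — a consequence of the $Q$-polynomial property together with the tensor-product structure $H(d,q)=H(1,q)^{\otimes d}$, under which $L_j(X)=\bigoplus_{|I|=j}\bigotimes_{i\in I}L_1^{(i)}\otimes\bigotimes_{i\notin I}L_0^{(i)}$. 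The reverse inclusion, and equality, then follows by comparing dimensions: both spaces have dimension $\sum_{j=0}^t k_j = \sum_{j=0}^t\binom{d}{j}(q-1)^j$, the first because the indicator monomials of a given multidegree are linearly independent after removing the relation $\sum_{a}\mathbf{1}[x_i=a]=1$, the second from $\dim L_j(X)=m_j=\binom{d}{j}(q-1)^j=k_j$ for the Hamming scheme. The main obstacle is bookkeeping the linear (in)dependence of the indicator monomials correctly so that the degree-$\le t$ span has the right dimension $k_0+\cdots+k_t$; once that is pinned down, the tensor-product description of the $L_j(X)$ makes the containment automatic and equality is forced.
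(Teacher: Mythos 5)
Your proposal is correct and follows essentially the same route as the paper: you identify $f_z$ as the characteristic function of the set of $x$ agreeing with $z$ on its support, show this function lies in $L_0(X)+\cdots+L_j(X)$, and conclude by comparing dimensions using $m_j=k_j$. Your justification of the containment via the per-coordinate expansion of each indicator $\mathbf{1}[x_i=a]$ into characters (the tensor structure $H(d,q)=H(1,q)^{\otimes d}$) is just a repackaging of the character-orthogonality argument the paper gives in Appendix~\ref{sec: Hom-L} in place of the semilattice citation, so no essentially new ingredient is involved.
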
 

\begin{proof}
Without loss of generality, we may suppose that $X=\{0,1,\dots,q-1\}^d$ and $u_0=(0,0,\dots,0)$.
Let $z=(z_1,z_2,\dots,z_d)\in X_j$.
Note that $z$ has exactly $j$ nonzero entries, and let $\ell_1,\ell_2,\dots,\ell_j$ be the corresponding coordinates.
Then, it is easy to see that $f_z$ is the characteristic function of the subset $\{(x_1,x_2,\dots,x_d)\in X\, |\, x_{\ell_h}=z_{\ell_h} \ (h=1,2,\dots,j) \}$, which is known to be contained in $\sum_{i=0}^jL_i(X)$; see, e.g., \cite{Delsarte1976JCTA,Stanton1985JCTA}.\footnote{In Appendix \ref{sec: Hom-L}, we give a direct proof that $f_z$ belongs to $\sum_{i=0}^jL_i(X)$, which does not use the theory of regular semilattices found in \cite{Delsarte1976JCTA,Stanton1985JCTA}.}
Since both sides of \eqref{Hom_e-L_e} have the same dimension, we obtain the desired result.
\end{proof}

\noindent
Thus, for $H(d,q)$, relative $t$-designs in the sense of Definition \ref{design-P} are equivalent to relative $t$-designs in the sense of Definition \ref{design-Q}.
This observation seems to be new for $H(d,q)$ for general $q$.
As is mentioned before, for $H(d,2)$, the result of Xiang \cite{Xiang2012JCTA} implies that the right hand side of \eqref{Fisher-Q-L} is also given explicitly by
\begin{equation}\label{LB-Q}
	\dim\big(L_0(S)+L_1(S)+\dots+L_e(S)\big) = m_e+m_{e-1}+\dots +m_{e-p+1},
\end{equation}
since $m_j=k_j$ $(j=0,1,\dots,d)$ in this case.
In a private communication, Xiang extended his main result in \cite{Xiang2012JCTA} to general $q$.
Thus, the right hand side of \eqref{Fisher-Q-L} is also given explicitly as \eqref{LB-Q} for $H(d,q)$.

In this paper, we investigate to what extent the above results can be generalized to other $P$- and/or $Q$-polynomial association schemes.
In Section \ref{sec: exact Fisher}, we derive sufficient conditions that \eqref{Fisher-P-exact} (resp.~\eqref{LB-Q}) holds for a $P$-polynomial (resp.~$Q$-polynomial) association scheme (Theorems \ref{criterion-P} and \ref{criterion-Q}).
These conditions can be readily checked for $H(d,q)$, so that we obtain different proofs of the results of Xiang mentioned above.
Concerning \eqref{Hom_e-L_e}, we first suspected that a similar result might hold for general (formally) self-dual $P$- and $Q$-polynomial association schemes, but it turns out that this is not the case in general.
Indeed, in Section \ref{sec: Hamming}, we show that if $\mathfrak{X}$ is formally self-dual, $P$-polynomial (and thus $Q$-polynomial), and satisfies $\operatorname{Hom}_0(X) + \operatorname{Hom}_1(X)= L_0(X) + L_1(X)$, then $\mathfrak{X}$ must be a Hamming scheme $H(d,q)$, provided that $d\geq 6$ (Theorem \ref{Hamming}).
All of these theorems are proved using the theory of the \emph{Terwilliger algebra} \cite{Terwilliger1992JAC,Terwilliger1993JACa,Terwilliger1993JACb}.
See \cite{Tanaka2009EJC} for more applications of the Terwilliger algebra to design theory.

\section{Computations of the Fisher type lower bounds}\label{sec: exact Fisher}

In this section and the next, we shall use some basic facts about the Terwilliger algebra.
In this context, we shall work with the complex vector space $\mathbb{C}^X$ instead of $\mathbb{R}^X$, but we note that the dimensions of the various subspaces in question do not change, as they are spanned by real vectors.

We use the following notation.
For every $x\in X$, let $\hat{x}\in \mathcal{F}(X)=\mathbb{C}^X$ be the characteristic function of the set $\{x\}$.
Let $A_0,A_1,\dots,A_d$ and $E_0,E_1,\dots,E_d$ be (fixed orderings of) the adjacency matrices and the primitive idempotents of $\mathfrak{X}$, respectively.
Let $E_0^*,E_1^*,\dots,E_d^*$ and $A_0^*,A_1^*,\dots,A_d^*$ be the diagonal matrices with diagonal entries $(E_i^*)_{xx}=(A_i)_{u_0x}$ and $(A_i^*)_{xx}=|X|(E_i)_{u_0x}$ ($x\in X$, $i=0,1,\dots,d$).
They form two bases of the \emph{dual Bose--Mesner algebra with respect to $u_0$}.
When we assume that $\mathfrak{X}$ is $P$-polynomial (resp.~$Q$-polynomial), we understand that $A_0,A_1,\dots,A_d$ (resp.~$E_0,E_1,\dots,E_d$) is the $P$-polynomial ordering (resp.~$Q$-polynomial ordering) and write $A=A_1=\sum_{i=0}^d\theta_iE_i$ (resp.~$A^*=A_1^*=\sum_{i=0}^d\theta_i^*E_i^*$).
The \emph{Terwilliger algebra} $T$ \emph{with respect to} $u_0$ is the subalgebra of the full matrix algebra generated by the Bose--Mesner algebra and the dual Bose--Mesner algebra.
We note that $T$ is semisimple since it is closed under conjugate-transposition.

The \emph{endpoint}, \emph{dual endpoint}, \emph{diameter}, and the \emph{dual diameter} of an irreducible $T$-module $W$ are defined by $\rho(W)=\min\{i\, |\,E_i^*W\ne 0\}$, $\rho^*(W)=\min\{i\, |\,E_iW\ne 0\}$, $\delta(W)=|\{i\, |\,E_i^*W\ne 0\}|-1$, and $\delta^*(W)=|\{i\, |\,E_iW\ne 0\}|-1$, respectively.\footnote{In \cite{Terwilliger1992JAC,Terwilliger1993JACa,Terwilliger1993JACb}, $\rho(W)$, $\rho^*(W)$, $\delta(W)$, and $\delta^*(W)$ are called the dual endpoint, endpoint, dual diameter, and the diameter of $W$, respectively.}
The module $W$ is called \emph{thin} (resp.~\emph{dual thin}) if $\dim(E_i^*W)\leq 1$ (resp.~$\dim(E_iW)\leq 1$) ($i=0,1,\dots,d$).
There is a unique irreducible $T$-module $W$ with $\rho(W)=0$ \emph{or} $\rho^*(W)=0$ up to isomorphism, that is to say, the \emph{primary} $T$-module $\operatorname{span}\{\hat{u}_0,A_1\hat{u}_0,\dots,A_d\hat{u}_0\}$; cf.~\cite[Lemma 3.6]{Terwilliger1992JAC}.
It is thin, dual thin, and has diameter and dual diameter both equal to $d$.
We call $\mathfrak{X}$ \emph{thin} (resp.~\emph{dual thin}) \emph{with respect to} $u_0$ if every irreducible $T$-module is thin (resp.~dual thin).\footnote{We simply call $\mathfrak{X}$ \emph{thin} (resp.~\emph{dual thin}) if it is thin (resp.~dual thin) with respect to every base vertex $u_0\in X$.}
The next two lemmas will be freely used in our discussions.

\begin{lem}[{\cite[Lemma 3.9]{Terwilliger1992JAC}}]
Suppose that $\mathfrak{X}$ is $P$-polynomial.
Let $W$ be an irreducible $T$-module and set $\rho=\rho(W)$, $\delta=\delta(W)$.
Then, the following hold:
\begin{enumerate}
\setlength{\itemsep}{0pt}
\item $AE_i^*W\subseteq E_{i-1}^*W+E_i^*W+E_{i+1}^*W$\, $(i=0,1,\dots,d)$, where $E_{-1}^*=E_{d+1}^*=0$.
\item $\{i\, |\,E_i^*W\ne 0\}=\{\rho,\rho+1,\dots,\rho+\delta\}$.
\item $E_i^*AE_j^*W\ne 0$ if $|i-j|=1$\, $(i,j=\rho,\dots,\rho+\delta)$.
\item If $W$ is thin, then $E_iW=E_iE_{\rho}^*W$\, $(i=0,1,\dots,d)$; in particular, $W$ is dual thin and $\delta^*(W)=\delta$.
\end{enumerate}
\end{lem}

\begin{lem}[{\cite[Lemma 3.12]{Terwilliger1992JAC}}]
Suppose that $\mathfrak{X}$ is $Q$-polynomial.
Let $W$ be an irreducible $T$-module and set $\rho^*=\rho^*(W)$, $\delta^*=\delta^*(W)$.
Then, the following hold:
\begin{enumerate}
\setlength{\itemsep}{0pt}
\item $A^*E_iW\subseteq E_{i-1}W+E_iW+E_{i+1}W$\, $(i=0,1,\dots,d)$, where $E_{-1}=E_{d+1}=0$.
\item $\{i\, |\,E_iW\ne 0\}=\{\rho^*,\rho^*+1,\dots,\rho^*+\delta^*\}$.
\item $E_iA^*E_jW\ne 0$ if $|i-j|=1$\, $(i,j=\rho^*,\dots,\rho^*+\delta^*)$.
\item If $W$ is dual thin, then $E_i^*W=E_i^*E_{\rho^*}W$\, $(i=0,1,\dots,d)$; in particular, $W$ is thin and $\delta(W)=\delta^*$.
\end{enumerate}
\end{lem}

We note that if $\mathfrak{X}$ is $P$-polynomial then
\begin{equation}\label{Hom in terms of T}
	\operatorname{Hom}_j(X)=\operatorname{span}\left\{\left.\!\left(\sum_{i=j}^dE_i^*A_{i-j}E_j^*\right)\hat{z}\ \right|\, z\in X_j\right\}=\left(\sum_{i=j}^dE_i^*A_{i-j}E_j^*\right)E_j^*\mathbb{C}^X
\end{equation}
for $j=0,1,\dots,d$.

\begin{thm}\label{criterion-P}
Suppose that $\mathfrak{X}$ is $P$-polynomial, and let $e, r_1, r_2, \dots, r_p$ be integers with $p-1\leq e\leq r_1< r_2<\dots< r_p\leq d$.
Suppose that every irreducible $T$-module $W$ with $\rho(W)\leq e$ is thin and satisfies $\rho(W)+\delta(W)\geq r_p$.
If the $p\times p$ matrix consisting of the intersection numbers $c_i=p_{1,i-1}^i$ $(i=1,2,\dots,d)$ defined by
\begin{equation}\label{p by p}
	\begin{pmatrix} 1& c_{r_1-e+p-1} & \dots & (c_{r_1-e+p-1}\dots c_{r_1-e+1}) \\ \vdots & \vdots && \vdots \\ 1 & c_{r_p-e+p-1} & \dots & (c_{r_p-e+p-1}\dots c_{r_p-e+1}) \end{pmatrix}
\end{equation}
(where the $(i,j)$-entry is $\prod_{h=1}^{j-1}c_{r_i-e+p-h}$) is nonsingular, then
\begin{equation*}
	\dim\big(\!\operatorname{Hom}_{0}(S)+\operatorname{Hom}_{1}(S)+\dots+\operatorname{Hom}_{e}(S)\big)=k_e+k_{e-1}+\dots + k_{e-p+1},
\end{equation*}
where $S=X_{r_1}\cup X_{r_2}\cup\dots\cup X_{r_p}$.
\end{thm}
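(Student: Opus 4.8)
The plan is to pass to the decomposition of $\mathbb{C}^X$ into irreducible $T$-modules and reduce the dimension count to the rank of a single matrix built from the intersection numbers $c_i$. Since $E_i^*$, $A_{i-j}$, and $E_j^*$ all lie in $T$, each irreducible $T$-module $W$ in a decomposition $\mathbb{C}^X=\bigoplus_WW$ is invariant under these; moreover (using $0\le j\le e\le r_1<\dots<r_p$) restriction to $S$ identifies $\mathrm{Hom}_j(S)$ with $\big(\sum_{a=1}^pE_{r_a}^*A_{r_a-j}E_j^*\big)\mathbb{C}^X=\bigoplus_W\big(\sum_{a=1}^pE_{r_a}^*A_{r_a-j}E_j^*\big)W$ inside $\bigoplus_aE_{r_a}^*\mathbb{C}^X=\bigoplus_W\bigoplus_aE_{r_a}^*W$. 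Hence $\mathcal H:=\sum_{j=0}^e\mathrm{Hom}_j(S)=\bigoplus_W\mathcal H_W$, where $\mathcal H_W:=\sum_{j=0}^e\big(\sum_aE_{r_a}^*A_{r_a-j}E_j^*\big)W$, so it suffices to compute $\dim\mathcal H_W$ for each $W$. If $\rho(W)>e$ then $E_j^*W=0$ for all $j\le e$ and $\mathcal H_W=0$; otherwise $W$ is thin and $r_p\le\rho(W)+\delta(W)$ by hypothesis, so $E_{r_a}^*W$ is one-dimensional for every $a$.

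For a thin $W$ with $\rho:=\rho(W)\le e$ and $\delta:=\delta(W)$, fix $0\ne w_\rho\in E_\rho^*W$ and set $w_{l+1}:=E_{l+1}^*A_1w_l$; a standard argument shows that $w_\rho,\dots,w_{\rho+\delta}$ are nonzero and form a basis of $W$ with $w_l$ spanning $E_l^*W$ (if some $w_l$ vanished, $\sum_{i\le l}E_i^*W$ would be a proper nonzero $T$-submodule). Combining $A_1^{\,k}=(c_2c_3\cdots c_k)A_k+(\text{terms }A_l\text{ with }l<k)$ with the vanishing of $E_i^*A_lE_j^*$ for $l<i-j$ yields $E_i^*A_{i-j}w_j=(c_2c_3\cdots c_{i-j})^{-1}w_i$ for $\rho\le j\le i\le\rho+\delta$. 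Thus $\big(\sum_aE_{r_a}^*A_{r_a-j}E_j^*\big)W$ is the line through $\sum_{a=1}^p(c_2\cdots c_{r_a-j})^{-1}w_{r_a}$, and, in the basis $w_{r_1},\dots,w_{r_p}$ of $\bigoplus_aE_{r_a}^*W$, the space $\mathcal H_W$ is the row space of the $(e-\rho+1)\times p$ matrix $N^{(\rho)}$ with $(j,a)$-entry $(c_2c_3\cdots c_{r_a-j})^{-1}$ $(\rho\le j\le e,\ 1\le a\le p)$. Note that $N^{(\rho)}$ depends only on $\rho$, and $\mathrm{rank}\,N^{(\rho)}\le\min(e-\rho+1,p)$ trivially.

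The crux is that this last inequality is an equality for every $\rho\le e$, and this is precisely where the hypothesis on \eqref{p by p} is used. Rescaling the columns of the $p\times p$ matrix $N^{(e-p+1)}$ by suitable products of the $c_i$ and then transposing turns it into the matrix \eqref{p by p}, so $N^{(e-p+1)}$ is nonsingular under our assumption. Then every subset of its rows is linearly independent; since $N^{(\rho)}$ for $e-p+1\le\rho\le e$ consists of some rows of $N^{(e-p+1)}$, while for $\rho<e-p+1$ it contains all of them, we obtain $\mathrm{rank}\,N^{(\rho)}=\min(e-\rho+1,p)$ for all $\rho\le e$, and therefore $\dim\mathcal H=\sum_{W:\,\rho(W)\le e}\min(e-\rho(W)+1,p)$.

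Finally, for $0\le j\le e$ one has $k_j=\dim E_j^*\mathbb{C}^X=\#\{W:\rho(W)\le j\}$ (with multiplicity): a module with $\rho(W)\le j$ is thin and satisfies $\dim E_j^*W=1$ because $j\le e\le r_p\le\rho(W)+\delta(W)$, while $\rho(W)>j$ forces $E_j^*W=0$. Summing over $j=e-p+1,\dots,e$ (legitimate since $e-p+1\ge0$) and interchanging the order of summation gives $k_e+k_{e-1}+\cdots+k_{e-p+1}=\sum_W\#\{j\in[e-p+1,e]:\rho(W)\le j\}=\sum_{W:\,\rho(W)\le e}\min(e-\rho(W)+1,p)=\dim\mathcal H$, which is the claim. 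The main obstacle is the third step---identifying $N^{(e-p+1)}$, up to harmless column rescalings and transposition, with \eqref{p by p}---which in turn relies on the explicit coefficient formula $E_i^*A_{i-j}w_j=(c_2\cdots c_{i-j})^{-1}w_i$ from the second step; the structure theory of thin $T$-modules invoked there is standard.
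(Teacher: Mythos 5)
Your proof is correct and takes essentially the same route as the paper's: you decompose $\mathbb{C}^X$ into irreducible $T$-modules, use thinness together with $\rho(W)+\delta(W)\geq r_p$ to build the raising chain $w_\rho,\dots,w_{\rho+\delta}$, and reduce the per-module dimension count to the nonsingularity of \eqref{p by p}, your matrix $N^{(e-p+1)}$ being the paper's coefficient matrix up to column rescaling and transposition. The concluding count $\sum_{W:\rho(W)\leq e}\min\{p,e-\rho(W)+1\}=k_e+k_{e-1}+\cdots+k_{e-p+1}$ is identical to the paper's.
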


\begin{proof}
Fix a set $\mathcal{W}$ of irreducible $T$-modules in $\mathbb{C}^X$ such that $\mathbb{C}^X=\bigoplus_{W\in\mathcal{W}}W$.
Observe that
\begin{equation*}
	E_j^*\mathbb{C}^X=\bigoplus_{\substack{W\in\mathcal{W} \\ \rho(W)\leq j}} E_j^*W \quad (j=0,1,\dots,d),
\end{equation*}
so that by \eqref{Hom in terms of T} we have
\begin{align*}
	\operatorname{Hom}_j(S) & = \left(\sum_{i=1}^p E_{r_i}^*A_{r_i-j}E_j^*\right) E_j^*\mathbb{C}^X  \\
	&=\bigoplus_{\substack{W\in\mathcal{W} \\ \rho(W)\leq j}} \left(\sum_{i=1}^p E_{r_i}^*A_{r_i-j}E_j^*\right)E_j^*W \quad (j=0,1,\dots,e).
\end{align*}
In particular, it follows that
\begin{equation*}
	\sum_{j=0}^e\operatorname{Hom}_j(S) = \bigoplus_{\substack{ W\in\mathcal{W} \\ \rho(W)\leq e }} \left(\sum_{j=0}^e\operatorname{Hom}_j(S)\right)\cap W,
\end{equation*}
and that
\begin{equation*}
	\left(\sum_{j=0}^e\operatorname{Hom}_j(S)\right)\cap W = \sum_{j=\rho(W)}^e \left(\sum_{i=1}^p E_{r_i}^*A_{r_i-j}E_j^*\right)E_j^*W \subseteq \sum_{i=1}^p E_{r_i}^*W
\end{equation*}
for every $W\in\mathcal{W}$ with $\rho(W)\leq e$.

Pick any $W\in\mathcal{W}$ with $\rho:=\rho(W)\leq e$, and let $v$ be a nonzero vector in $E_{\rho}^*W$.
Recall that $\{i\,|\,E_i^*W\ne 0\}=\{\rho,\dots,\rho+\delta\}$, where $\delta=\delta(W)$.
First, suppose that $\rho\leq e-p+1$.
Since $W$ is thin and since $\rho+\delta\geq e$, for $j=e-p+1,\dots,e$, the vector $v_j=E_j^*A^{j-\rho}v$ is nonzero and hence is a basis of $E_j^*W$.
Moreover, for $j=e-p+1,\dots,e$, it follows that
\begin{align*}
	\left(\sum_{i=1}^p E_{r_i}^*A_{r_i-j}E_j^*\right) v_j &= \sum_{i=1}^p \frac{1}{c_{r_i-j}\dots c_2c_1} E_{r_i}^*A^{r_i-j}E_j^* v_j \\
	&= \sum_{i=1}^p \frac{1}{c_{r_i-j}\dots c_2c_1} E_{r_i}^*A^{r_i-\rho}v \\
	&= \sum_{i=1}^p \frac{1}{c_{r_i-j}\dots c_2c_1} E_{r_i}^*A^{r_i-e+p-1}E_{e-p+1}^* A^{e-p+1-\rho}v \\
	&= \sum_{i=1}^p (c_{r_i-e+p-1}\dots c_{r_i-j+1}) E_{r_i}^*A_{r_i-e+p-1}v_{e-p+1},
\end{align*}
where we have used the fact that $c_n\dots c_2c_1$ is the number of the geodesics between two vertices at distance $n$ (in the distance-regular graph $(X,R_1)$).
Since $r_1,r_2,\dots,r_p\in \{\rho,\dots,\rho+\delta\}$, the vectors $E_{r_i}^*A_{r_i-e+p-1}v_{e-p+1}$ $(i=1,2,\dots,p)$ are nonzero and hence form a basis of $\sum_{i=1}^p E_{r_i}^*W$.
Thus, since the coefficient matrix \eqref{p by p} is nonsingular, the vectors $\left(\sum_{i=1}^p E_{r_i}^*A_{r_i-j}E_j^*\right) v_j$ $(j=e-p+1,\dots,e)$ also form a basis of $\sum_{i=1}^p E_{r_i}^*W$.
It follows that $\big(\sum_{j=0}^e\operatorname{Hom}_j(S)\big)\cap W=\sum_{i=1}^p E_{r_i}^*W$.
In particular, $\dim\big(\big(\sum_{j=0}^e\operatorname{Hom}_j(S)\big)\cap W\big)=p$.
Next, suppose that $e-p+2\leq \rho\leq e$.
Likewise, using the fact that the last $(e-\rho+1)$ columns of the matrix \eqref{p by p} are linearly independent, we find that the vectors $\big(\sum_{i=1}^p E_{r_i}^*A_{r_i-j}E_j^*\big) v_j$ $(j=\rho,\dots,e)$ are linearly independent, and hence that $\dim\big(\big(\sum_{j=0}^e\operatorname{Hom}_j(S)\big)\cap W\big)=e-\rho+1$.
Thus, it follows that
\begin{align*}
	\dim\left(\sum_{j=0}^e\operatorname{Hom}_j(S)\right) &= \sum_{\substack{ W\in\mathcal{W} \\ \rho(W)\leq e }}\min\{p,e-\rho(W)+1\} \\
	&= \sum_{\substack{ W\in\mathcal{W} \\ \rho(W)\leq e }} \sum_{j=e-p+1}^e \dim (E_j^*W) \\
	&= \sum_{j=e-p+1}^e \dim (E_j^*\mathbb{C}^X) \\
	&= \sum_{j=e-p+1}^e k_j,
\end{align*}
as desired.
\end{proof}

\begin{rem}
In view of \cite[Lemma 5.1]{Caughman1999DM}, the assumption $\rho(W)+\delta(W)\geq r_p$ in Theorem \ref{criterion-P} holds provided that $r_p\leq d-e$.
\end{rem}

\begin{ex}\label{Hamming-P}
Suppose that $\mathfrak{X}$ is a Hamming scheme $H(d,q)$.
Then, $c_i=i$ $(i=1,2,\dots,d)$.
Thus, it follows that the matrix \eqref{p by p} is essentially Vandermonde (in the variables $r_1,r_2,\dots,r_p$), and hence is nonsingular.
We note that $\mathfrak{X}$ is thin; cf.~\cite[Example 6.1]{Terwilliger1993JACb}.
\end{ex}

\begin{ex}
Suppose that $\mathfrak{X}$ is a dual polar scheme.
Then, $c_i=(q^i-1)/(q-1)$ $(i=1,2,\dots,d)$ for some prime power $q\geq 2$.
Thus, the matrix \eqref{p by p} is again essentially Vandermonde (in the variables $q^{r_1},q^{r_2},\dots,q^{r_p}$), and hence is nonsingular.
We note that $\mathfrak{X}$ is thin; cf.~\cite[Example 6.1]{Terwilliger1993JACb}.
In Appendix \ref{sec: dual-polar}, we show that Theorem \ref{Fisher-P} is valid for dual polar schemes.
\end{ex}

Next, we move on to the $Q$-polynomial case.

\begin{thm}\label{criterion-Q}
Suppose that $\mathfrak{X}$ is $Q$-polynomial, and let $e, r_1, r_2, \dots, r_p$ be integers with $p-1\leq e\leq d$ and $0\leq r_1< r_2<\dots< r_p\leq d$.
If every irreducible $T$-module $W$ with $\rho^*(W)\leq e$ is dual thin, and satisfies $\rho^*(W)+\delta^*(W)\geq e$ and $|\{i\, |\, E_{r_i}^*W\ne 0\}|\geq \min\{p,e-\rho^*(W)+1\}$, then
\begin{equation*}
	\dim\big(L_0 (S)+L_1 (S)+\dots+L_e (S)\big)=m_e+m_{e-1}+\dots + m_{e-p+1},
\end{equation*}
where $S=X_{r_1}\cup X_{r_2}\cup\dots\cup X_{r_p}$.
\end{thm}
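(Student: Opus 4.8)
The plan is to mirror the proof of Theorem \ref{criterion-P} almost line for line, working on the ``dual side'' of the Terwilliger algebra. First I would fix a set $\mathcal{W}$ of irreducible $T$-modules with $\mathbb{C}^X=\bigoplus_{W\in\mathcal{W}}W$, and observe that the column space of $E_j$ decomposes as $L_j(X)=E_j\mathbb{C}^X=\bigoplus_{W\colon\rho^*(W)\leq j}E_jW$. Restricting a function to $S=X_{r_1}\cup\dots\cup X_{r_p}$ amounts to applying the projection $\sum_{i=1}^p E_{r_i}^*$, so
\begin{equation*}
	L_j(S)=\Big(\sum_{i=1}^p E_{r_i}^*\Big)L_j(X)=\bigoplus_{\substack{W\in\mathcal{W}\\ \rho^*(W)\leq j}}\Big(\sum_{i=1}^p E_{r_i}^*\Big)E_jW.
\end{equation*}
The sum $\sum_{j=0}^e L_j(S)$ then splits as a direct sum over the modules $W$ with $\rho^*(W)\leq e$ of the pieces $\big(\sum_{j=0}^e L_j(S)\big)\cap W$, exactly as in the $P$-polynomial argument, because the various $E_{r_i}^*W$ inside a fixed $W$ are independent.

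Next I would analyze one module $W$ at a time. Put $\rho^*=\rho^*(W)\leq e$. Dual thinness means $\dim E_jW\leq 1$ for all $j$; combined with $\rho^*(W)+\delta(W)\geq e$ one gets that $E_jW$ is one-dimensional and spanned by $v_j:=E_j(A^*)^{j-\rho^*}v$ for a fixed nonzero $v\in E_{\rho^*}W$, for each $j$ with $\rho^*\leq j\leq e$. For each such $j$ the vector $\big(\sum_{i=1}^p E_{r_i}^*\big)v_j$ lies in $\bigoplus_{i=1}^p E_{r_i}^*W$, and I would expand it in terms of the ``standard'' basis of $\bigoplus_i E_{r_i}^*W$. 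Using the three-term recurrence for $A^*$ (the dual intersection numbers $c_i^*=q^{\,i}_{1,i-1}$), each $v_j$ is, up to a nonzero scalar, $E_{e-p+1}^*$-normalizable in the same way that $A^{r_i-j}$ was used on the primal side; the upshot is that the matrix expressing $\big\{\big(\sum_i E_{r_i}^*\big)v_j\big\}_{j=e-p+1}^{e}$ in terms of the components in the $p$ shells $X_{r_i}$ is, after removing nonzero scalar factors, a $p\times p$ matrix of the same Vandermonde-like shape as \eqref{p by p}, but now built from the dual intersection numbers and evaluated at the ``points'' determined by $r_1,\dots,r_p$. The hypothesis $|\{i\mid E_{r_i}^*W\ne 0\}|\geq\min\{p,e-\rho^*(W)+1\}$ guarantees that enough of these components are actually nonzero, so that this matrix has full rank $\min\{p,e-\rho^*+1\}$ and hence $\dim\big(\big(\sum_{j=0}^e L_j(S)\big)\cap W\big)=\min\{p,e-\rho^*(W)+1\}$.

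Finally I would sum over $\mathcal{W}$. Just as in Theorem \ref{criterion-P}, $\min\{p,e-\rho^*(W)+1\}=\sum_{j=e-p+1}^{e}\dim E_jW$ (because the nonzero $E_jW$ for a fixed dual thin $W$ occupy a contiguous block of indices $\rho^*(W),\dots,\rho^*(W)+\delta(W)$, with $\rho^*(W)+\delta(W)\geq e$), so
\begin{equation*}
	\dim\Big(\sum_{j=0}^e L_j(S)\Big)=\sum_{\substack{W\in\mathcal{W}\\ \rho^*(W)\leq e}}\ \sum_{j=e-p+1}^{e}\dim E_jW=\sum_{j=e-p+1}^{e}\dim E_j\mathbb{C}^X=\sum_{j=e-p+1}^{e}m_j,
\end{equation*}
which is the claim. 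I expect the main obstacle to be the bookkeeping in the per-module step: one has to verify that, after normalizing $v_j$ so that its $E_{r_i}^*$-component is nonzero whenever $E_{r_i}^*W\ne 0$, the resulting coefficient matrix really does reduce to a nonsingular minor of the relevant Vandermonde-type matrix, and that the hypothesis on $|\{i\mid E_{r_i}^*W\ne 0\}|$ is exactly what is needed for the generic rank to be attained. Unlike the $P$-polynomial case, here there is no single global nonsingularity condition like \eqref{p by p}; instead the genericity must be checked module by module, which is why the hypothesis is phrased in terms of each $W$ separately. Once that step is in place, the summation and the identification with $\sum_{j=e-p+1}^e m_j$ are routine.
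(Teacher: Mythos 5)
Your global skeleton (decompose $\mathbb{C}^X$ into irreducible $T$-modules, compute $\dim\big(\big(\sum_{j\leq e}L_j(S)\big)\cap W\big)$ one module at a time, then sum) matches the paper, but the crucial per-module independence step is not actually carried out, and the mechanism you propose for it would not work as described. You take the projected vectors $v_j=E_j(A^*)^{j-\rho^*}v$ and claim that expanding $\big(\sum_{i=1}^pE_{r_i}^*\big)v_j$ in $\bigoplus_iE_{r_i}^*W$ yields a coefficient matrix ``of the same Vandermonde-like shape as \eqref{p by p} but built from the dual intersection numbers,'' whose nonsingularity is then to be ``checked module by module.'' No such formula is available: the identity used on the primal side, $E_{r_i}^*A_{r_i-j}E_j^*=(c_{r_i-j}\cdots c_1)^{-1}E_{r_i}^*A^{r_i-j}E_j^*$, dualizes to statements about products of the form $E_{r_i}A_k^*E_j$, whereas restriction to $S$ is multiplication by the diagonal projections $E_{r_i}^*$, which do not interact with the decomposition $\bigoplus_jE_jW$ through dual intersection numbers at all. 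Moreover, the theorem contains no nonsingularity hypothesis analogous to \eqref{p by p}, so a proof whose key matrix is only ``expected'' to be nonsingular, module by module, has left the heart of the statement unproven --- you yourself flag exactly this as the main obstacle.

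What makes the $Q$-polynomial side work with no genericity hypothesis is a simpler observation that your write-up misses: do not project by $E_j$ at all, but use the Krylov vectors $(A^*)^hv$ for $h=0,1,\dots,\min\{p,e-\rho^*+1\}-1$, which lie in $\sum_{j=\rho^*}^eE_jW$ because $A^*$ acts tridiagonally on $\bigoplus_jE_jW$. Since $A^*=\sum_i\theta_i^*E_i^*$ is diagonal, one gets $\big(\sum_{i=1}^pE_{r_i}^*\big)(A^*)^hv=\sum_{i=1}^p\theta_{r_i}^{*h}E_{r_i}^*v$, so the coefficient matrix is a genuine Vandermonde in the dual eigenvalues $\theta_{r_1}^*,\dots,\theta_{r_p}^*$, which are pairwise distinct; nonsingularity is automatic. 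Dual thinness gives that $\{E_i^*v\mid E_i^*W\neq0\}$ is an orthogonal basis of $W$ (in particular $E_{r_i}^*W\neq0$ forces $E_{r_i}^*v\neq0$; note that a priori dual thinness bounds $\dim E_jW$, not $\dim E_{r_i}^*W$, a point your ``standard basis of $\bigoplus_iE_{r_i}^*W$'' glosses over), and the hypothesis $|\{i\mid E_{r_i}^*W\neq0\}|\geq\min\{p,e-\rho^*(W)+1\}$ then supplies enough nonzero components for the Vandermonde argument, giving the per-module dimension $\min\{p,e-\rho^*(W)+1\}$ exactly. Your vectors $v_j$ could in principle be used instead, since they are related to the $(A^*)^hv$ by an invertible triangular change of basis, but the independence argument must then be reduced to this Vandermonde computation rather than to an analogue of \eqref{p by p}. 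Your final summation step coincides with the paper's and is fine.
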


\begin{proof}
Again, fix a set $\mathcal{W}$ of irreducible $T$-modules in $\mathbb{C}^X$ such that $\mathbb{C}^X=\bigoplus_{W\in\mathcal{W}}W$.
Observe that
\begin{equation*}
	L_j(X)=E_j\mathbb{C}^X=\bigoplus_{\substack{W\in\mathcal{W} \\ \rho^*(W)\leq j}} E_jW \quad (j=0,1,\dots,d),
\end{equation*}
so that
\begin{equation*}
	L_j(S)=\left(\sum_{i=1}^p E_{r_i}^* \right) E_j\mathbb{C}^X =\bigoplus_{\substack{W\in\mathcal{W} \\ \rho^*(W)\leq j}} \left(\sum_{i=1}^p E_{r_i}^* \right)E_jW \quad (j=0,1,\dots,d).
\end{equation*}
In particular, it follows that
\begin{equation*}
	\sum_{j=0}^eL_j(S) = \bigoplus_{\substack{ W\in\mathcal{W} \\ \rho^*(W)\leq e }} \left(\sum_{j=0}^eL_j(S)\right)\cap W,
\end{equation*}
and that
\begin{equation*}
	\left(\sum_{j=0}^eL_j(S)\right)\cap W = \left(\sum_{i=1}^p E_{r_i}^* \right) \sum_{j=\rho^*(W)}^e E_jW \subseteq \sum_{i=1}^p E_{r_i}^*W
\end{equation*}
for every $W\in\mathcal{W}$ with $\rho^*(W)\leq e$.

Pick any $W\in\mathcal{W}$ with $\rho^*:=\rho^*(W)\leq e$, and let $v$ be a nonzero vector in $E_{\rho^*}W$.
First, suppose that $\rho^*\leq e-p+1$.
Then, $v,A^*v,\dots,A^{*p-1}v\in \sum_{j=\rho^*}^eE_jW$.
Since $W$ is dual thin, $\{E_i^*v\, |\, E_i^*W\ne 0\}$ is an orthogonal basis of $W$.
We note that $E_{r_i}^*W\ne 0$ for $i=1,2,\dots,p$.
Thus, the vectors $\big(\sum_{i=1}^p E_{r_i}^* \big)A^{*h}v=\sum_{i=1}^p \theta_{r_i}^{*h} E_{r_i}^*v$ $(h=0,1,\dots,p-1)$ belong to $\big(\sum_{j=0}^eL_j(S)\big)\cap W$ and form a basis of $\sum_{i=1}^p E_{r_i}^*W$, since the coefficient matrix is Vandermonde.
It follows that $\big(\sum_{j=0}^eL_j(S)\big)\cap W=\sum_{i=1}^p E_{r_i}^*W$.
In particular, $\dim\big(\big(\sum_{j=0}^eL_j(S)\big)\cap W\big)=p$.
Next, suppose that $e-p+2\leq \rho^*\leq e$.
Likewise, we find that the vectors $\big(\sum_{i=1}^p E_{r_i}^* \big)A^{*h}v$ $(h=0,1,\dots,e-\rho^*)$ belong to $\big(\sum_{j=0}^eL_j(S)\big)\cap W$ and are linearly independent, from which it follows that $\dim\big(\big(\sum_{j=0}^eL_j(S)\big)\cap W\big)=e-\rho^*+1$.
Thus, it follows that
\begin{align*}
	\dim\left(\sum_{j=0}^eL_j(S)\right) &= \sum_{\substack{ W\in\mathcal{W} \\ \rho^*(W)\leq e }}\min\{p,e-\rho^*(W)+1\} \\
	&= \sum_{\substack{ W\in\mathcal{W} \\ \rho^*(W)\leq e }} \sum_{j=e-p+1}^e \dim (E_jW) \\
	&= \sum_{j=e-p+1}^e \dim (E_j\mathbb{C}^X) \\
	&= \sum_{j=e-p+1}^e m_j,
\end{align*}
where the second equality follows since every $W\in\mathcal{W}$ with $\rho^*(W)\leq e$ is dual thin and satisfies $\{\rho^*(W),\dots,e\}\subseteq\{\rho^*(W),\dots,\rho^*(W)+\delta^*(W)\}=\{j\,|\,E_jW\ne 0\}$.
This completes the proof.
\end{proof}

\begin{rem}
In view of \cite[Lemma 7.1]{Caughman1999DM}, the assumption $\rho^*(W)+\delta^*(W)\geq e$ in Theorem \ref{criterion-Q} holds provided that $e\leq \lceil d/2\rceil$.
\end{rem}

\begin{ex}
Suppose that $\mathfrak{X}$ is a Hamming scheme $H(d,q)$.
Then, $\rho(W)=\rho^*(W)$ for every irreducible $T$-module $W$; cf.~\cite[Example 6.1]{Terwilliger1993JACb}.
Thus, the assumption of Theorem \ref{criterion-Q} is satisfied provided that $e\leq r_1<r_2<\dots<r_p\leq d-e$.
Of course, in this case the conclusion also follows from Proposition \ref{Hom-L}, Theorem \ref{criterion-P}, and Example \ref{Hamming-P}.
\end{ex}

\begin{ex}
Suppose that $\mathfrak{X}$ is $P$-polynomial, $Q$-polynomial, and bipartite.
In this case, Caughman \cite{Caughman1999DM} showed that $\mathfrak{X}$ is thin, dual thin, and that every irreducible $T$-module $W$ satisfies $\delta(W)=\delta^*(W)=d-2\rho^*(W)$ and $\rho^*(W)\leq \rho(W)\leq 2\rho^*(W)$.
Thus, the assumption of Theorem \ref{criterion-Q} is satisfied provided that $2e\leq r_1<r_2<\dots<r_p\leq d-e$.
\end{ex}

\begin{ex}
Suppose that $\mathfrak{X}$ is a Johnson scheme $J(v,d)$.
Then, $\mathfrak{X}$ is thin, dual thin, and every irreducible $T$-module $W$ satisfies $\rho(W)\leq \rho^*(W)$; cf.~\cite[Example 6.1]{Terwilliger1993JACb}.
Thus, in view of \cite[Lemma 5.1]{Caughman1999DM}, the assumption of Theorem \ref{criterion-Q} is satisfied provided that $e\leq r_1<r_2<\dots<r_p\leq d-e$.
\end{ex}

We note that if some of the assumptions on the irreducible $T$-modules in Theorems \ref{criterion-P} and \ref{criterion-Q} are not satisfied, then the dimensions of the subspaces in question can indeed be smaller.
For example, we have the following result:

\begin{prop}\label{e=p=1}
Suppose that $\mathfrak{X}$ is $P$-polynomial and $Q$-polynomial, and let $S=X_d$.
Let $\eta_1\geq \eta_2\geq\dots\geq \eta_{k_1}$ be the eigenvalues\footnote{The $\eta_i$ are the eigenvalues of the subgraph of $(X,R_1)$ induced on $X_1$ (called the \emph{local graph}), which is regular with valency $\eta_1=a_1=p_{11}^1$.} of $E_1^*AE_1^*$ on $E_1^*\mathbb{C}^X$.
For every $\theta\in\mathbb{C}\cup\{\infty\}$, let $\tilde{\theta}=-1-b_1/(1+\theta)$ (a M\"{o}bius transformation) where $b_1=p_{12}^1$, and define $\mu_{\theta}=|\{i\geq 2\,|\,\eta_i=\tilde{\theta}\}|$.
Then, $\dim\big(\!\operatorname{Hom}_0(S)+\operatorname{Hom}_1(S)\big)=k_1-\mu_{\theta_1}-\mu_{\theta_d}$ and $\dim\big(L_0(S)+L_1(S)\big)=m_1-\mu_{\theta_d}$.
\end{prop}

\begin{proof}
Let $\mathcal{W}$ be as in the proofs of Theorems \ref{criterion-P} and \ref{criterion-Q}.
Recall that $A$ and $A^*$ act on every $W\in\mathcal{W}$ as a tridiagonal pair in the sense of \cite{ITT2001P}; cf.~\cite[Example 1.4]{ITT2001P}.
In particular, by \cite[Lemma 4.5]{ITT2001P} we have $\delta(W)=\delta^*(W)$ for $W\in\mathcal{W}$.

Let $\mathcal{W}_1=\{W\in\mathcal{W}\,|\,\rho(W)=1,\rho^*(W)=2,\delta(W)=d-2\}$, $\mathcal{W}_d=\{W\in\mathcal{W}\,|\,\rho(W)=1,\rho^*(W)=1,\delta(W)=d-2\}$.
Let $\theta_{\mathrm{sec}},\theta_{\mathrm{min}}$ be the second largest and the smallest eigenvalues of $A$, respectively.
Then, in view of \cite[Lemma 8.5]{GT2002EJC}, it follows that the condition that $\mathcal{W}_1\ne\emptyset$ (resp.~$\mathcal{W}_d\ne\emptyset$) implies that $\theta_1\in\{\theta_{\mathrm{sec}},\theta_{\mathrm{min}}\}$ (resp.~$\theta_d\in\{\theta_{\mathrm{sec}},\theta_{\mathrm{min}}\}$).
Next, observe that $\tilde{\theta}_i\not\in (\tilde{\theta}_{\mathrm{sec}},\tilde{\theta}_{\mathrm{min}})$ $(i=1,2,\dots,d)$.
On the other hand, by \cite[Theorem 8.4]{GT2002EJC} we have $\tilde{\theta}_{\mathrm{sec}}\leq\eta_i\leq\tilde{\theta}_{\mathrm{min}}$ $(i=2,3,\dots,k_1)$.
Thus, the condition that $\mu_{\theta_1}>0$ (resp.~$\mu_{\theta_d}>0$) implies again that $\theta_1\in\{\theta_{\mathrm{sec}},\theta_{\mathrm{min}}\}$ (resp.~$\theta_d\in\{\theta_{\mathrm{sec}},\theta_{\mathrm{min}}\}$).
With these explained, it follows from \cite[Lemma 8.5, Theorems 9.8, 10.1, 11.5]{GT2002EJC} that every $W\in\mathcal{W}_1\cup\mathcal{W}_d$ is thin, and that $\mu_{\theta_1}=|\mathcal{W}_1|$ and $\mu_{\theta_d}=|\mathcal{W}_d|$.

Let $W\in\mathcal{W}$ and write $\rho=\rho(W)$, $\rho^*=\rho^*(W)$, and $\delta=\delta(W)=\delta^*(W)$.
Recall that $\{i\,|\,E_i^*W\ne 0\}=\{\rho,\dots,\rho+\delta\}$ and $\{i\,|\,E_iW\ne 0\}=\{\rho^*,\dots,\rho^*+\delta\}$.
By \cite[Theorem 1.3]{NT2008LAAd}, we have $\dim E_{\rho}^*W=\dim E_{\rho^*}W=\dim E_{\rho+\delta}^*W=\dim E_{\rho^*+\delta}W=1$.

We first compute $\dim\big(\!\operatorname{Hom}_0(S)+\operatorname{Hom}_1(S)\big)$.
Suppose that $\rho=0$.
Then, $\delta=d$ and $W$ is the (thin) primary $T$-module.
It follows that $\big(E_d^*A_dE_0^*\big)E_0^*W+\big(E_d^*A_{d-1}E_1^*\big)E_1^*W=E_d^*W\ne 0$.
Next, suppose that $\rho=1$.
By \cite[Lemma 5.1]{Caughman1999DM}, we have $\delta\in\{d-2,d-1\}$.
Observe that $\delta=d-2$ precisely when $W\in\mathcal{W}_1\cup\mathcal{W}_d$.
Terwilliger \cite[Lecture 34]{Terwilliger1993N} showed that $E_d^*W=\big(E_d^*A_{d-1}E_1^*\big)E_1^*W$, from which it follows that $\big(E_d^*A_{d-1}E_1^*\big)E_1^*W=0$ if and only if $\delta=d-2$, i.e., $W\in\mathcal{W}_1\cup\mathcal{W}_d$.
Thus, as in the proof of Theorem \ref{criterion-P}, it follows that $\dim\big(\!\operatorname{Hom}_0(S)+\operatorname{Hom}_1(S)\big)=k_1-|\mathcal{W}_1\cup\mathcal{W}_d|=k_1-\mu_{\theta_1}-\mu_{\theta_d}$.

We now compute $\dim\big(L_0(S)+L_1(S)\big)$.
We note that $E_d^*W=E_d^*E_{\rho^*}W$ in view of \cite[Lemma 5.1]{NT2008LAAb}.
Suppose that $\rho^*=0$.
Then, $\delta=d$ and $W$ is again the primary $T$-module.
It follows that $E_d^*(E_0W+E_1W)=E_d^*W\ne 0$.
Next, suppose that $\rho^*=1$.
By \cite[Lemma 7.1]{Caughman1999DM}, we have $\delta\in\{d-2,d-1\}$.
It follows that $E_d^*E_1W=0$ if and only if $W\in\mathcal{W}_d$.
Thus, as in the proof of Theorem \ref{criterion-Q}, it follows that $\dim\big(L_0(S)+L_1(S)\big)=m_1-|\mathcal{W}_d|=m_1-\mu_{\theta_d}$.
\end{proof}

\begin{ex}
Suppose that $\mathfrak{X}$ is a Hamming scheme $H(d,q)$.
Then, $k_1=m_1=d(q-1)$, $b_1=(d-1)(q-1)$, $\theta_i=q(d-i)-d$ ($i=0,1,\dots,d$), and it is easy to see that $\mu_{\theta_1}=0$ and $\mu_{\theta_d}=d-1$.
We note that relative $2$-designs supported by $X_d$ (in the sense of both Definition \ref{design-P} and Definition \ref{design-Q}) are precisely the $2$-designs (i.e., orthogonal arrays with strength $2$) in the Hamming scheme $H(d,q-1)$ induced on $X_d$, and Proposition \ref{e=p=1} gives the Rao bound $1+d(q-2)$.
\end{ex}

\begin{ex}
Suppose that $\mathfrak{X}$ is a Johnson scheme $J(v,d)$.
Then, $k_1=d(v-d)$, $m_1=v-1$, $b_1=(d-1)(v-d-1)$, $\theta_i=(d-i)(v-d-i)-i$ ($i=0,1,\dots,d$), and it is easy to see that $\mu_{\theta_1}=(d-1)(v-d-1)$ and $\mu_{\theta_d}=d-1$.
We note that relative $2$-designs supported by $X_d$ (in the sense of both Definition \ref{design-P} and Definition \ref{design-Q}) are precisely the $2$-designs in the Johnson scheme $J(v-d,d)$ induced on $X_d$, and Proposition \ref{e=p=1} gives the Fisher bound $v-d$.
\end{ex}

\section{A characterization of Hamming schemes}\label{sec: Hamming}

In this section, for $d\geq 6$, we characterize the Hamming schemes $H(d,q)$ as the formally self-dual $P$- and $Q$-polynomial association schemes with the property that $\operatorname{Hom}_0 (X)+\operatorname{Hom}_1(X)=L_0 (X)+L_1(X)$.
We begin with the following result:

\begin{prop}\label{criterion-Hom-L}
Suppose that $\mathfrak{X}$ is $P$-polynomial, $Q$-polynomial, and that $\operatorname{Hom}_0 (X)+\operatorname{Hom}_1(X)=L_0 (X)+L_1(X)$.
Then, $c_i/(\theta_i^*-\theta_0^*)$ is independent of $i=1,2,\dots,d$.
\end{prop}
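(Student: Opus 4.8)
The plan is to localise the entire computation inside the primary module $W_0:=T\hat{u}_0$, which coincides with $M\hat{u}_0=\mathrm{span}\{\hat{X}_0,\hat{X}_1,\dots,\hat{X}_d\}$, where $M$ is the Bose--Mesner algebra and $\hat{X}_i:=A_i\hat{u}_0$ is the characteristic vector of the shell $X_i$. Since $\mathfrak{X}$ is $P$-polynomial all shells are nonempty, so $\hat{X}_0,\dots,\hat{X}_d$ are linearly independent; also $E_1W_0\subseteq W_0$ because $E_1\in M$. As $X_0=\{u_0\}$ we have $\mathrm{Hom}_0(X)=L_0(X)=\mathbb{C}\mathbf{1}$ with $\mathbf{1}=\sum_{i=0}^{d}\hat{X}_i$, and since $L_0(X)\perp L_1(X)$ the hypothesis becomes $\mathrm{Hom}_0(X)+\mathrm{Hom}_1(X)=\mathbb{C}\mathbf{1}\oplus L_1(X)$; thus only $\mathrm{Hom}_1(X)$ and $L_1(X)$ carry information.

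First I would write down an explicit element of $\mathrm{Hom}_1(X)$ lying in $W_0$. Summing the generators $\big(\sum_{i=1}^{d}E_i^{*}A_{i-1}E_1^{*}\big)\hat{z}=f_z$ $(z\in X_1)$ of $\mathrm{Hom}_1(X)$ amounts to applying $\sum_{i=1}^{d}E_i^{*}A_{i-1}E_1^{*}$ to $\sum_{z\in X_1}\hat{z}=\hat{X}_1=A_1\hat{u}_0$; using $E_1^{*}\hat{X}_1=\hat{X}_1$, $A_{i-1}A_1=\sum_{m}p_{i-1,1}^{m}A_m$, and $E_i^{*}A_m\hat{u}_0=\delta_{im}\hat{X}_m$, this collapses to $\sum_{z\in X_1}f_z=\sum_{i=1}^{d}c_i\hat{X}_i$, which therefore lies in $\mathrm{Hom}_1(X)\cap W_0$.

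Next I would identify $L_1(X)\cap W_0$. Since $L_1(X)$ is the image of the projection $E_1$, any $v\in L_1(X)\cap W_0$ satisfies $v=E_1v\in E_1W_0$, while conversely $E_1W_0\subseteq L_1(X)\cap W_0$; hence $L_1(X)\cap W_0=E_1W_0$. Because $A_jE_1$ is a scalar multiple of $E_1$ ($E_1$ being a primitive idempotent of the commutative algebra $M$), $E_1W_0=\mathrm{span}\{E_1\hat{X}_j\}=\mathbb{C}E_1\hat{u}_0$; and from $A_1^{*}=\sum_i\theta_i^{*}E_i^{*}$ together with $(A_1^{*})_{xx}=|X|(E_1)_{u_0x}$ one reads off $E_1\hat{u}_0=|X|^{-1}\sum_{i=0}^{d}\theta_i^{*}\hat{X}_i$. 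Now invoke the hypothesis: $\sum_{i=1}^{d}c_i\hat{X}_i\in\mathrm{Hom}_1(X)\subseteq\mathbb{C}\mathbf{1}\oplus L_1(X)$, and since both this vector and $\mathbf{1}$ lie in $W_0$, its $L_1(X)$-component lies in $L_1(X)\cap W_0=\mathbb{C}E_1\hat{u}_0$. Thus $\sum_{i=1}^{d}c_i\hat{X}_i=a\mathbf{1}+b\sum_{i=0}^{d}\theta_i^{*}\hat{X}_i$ for some scalars $a,b$; equating coefficients of the linearly independent vectors $\hat{X}_0,\dots,\hat{X}_d$ gives $0=a+b\theta_0^{*}$ and $c_i=a+b\theta_i^{*}$ $(1\le i\le d)$, so $c_i=b(\theta_i^{*}-\theta_0^{*})$. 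Since $\mathfrak{X}$ is $Q$-polynomial the dual eigenvalues $\theta_0^{*},\dots,\theta_d^{*}$ are distinct, so $c_i/(\theta_i^{*}-\theta_0^{*})=b$ is independent of $i=1,\dots,d$, as claimed.

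I do not anticipate a genuine obstacle. The only points requiring a little care are the two identifications with $W_0$ ($\sum_i c_i\hat{X}_i\in\mathrm{Hom}_1(X)\cap W_0$ and $L_1(X)\cap W_0=\mathbb{C}E_1\hat{u}_0$), which rest only on $E_1\in M$, the $M$-invariance of $W_0$, and the directness of $L_0(X)\oplus L_1(X)$, together with the routine bookkeeping rewriting $\sum_{z\in X_1}f_z$ as $\sum_i c_i\hat{X}_i$. It is worth noting that neither $b\ne 0$ (which is automatic, as $c_1=1$) nor formal self-duality is used anywhere in the argument.
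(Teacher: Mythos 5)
Your proof is correct and follows essentially the same route as the paper: both identify $\sum_{z\in X_1}f_z=\sum_{i=1}^d c_iA_i\hat{u}_0$ as an element of $\mathrm{Hom}_1(X)$ lying in the primary module, intersect with $L_0(X)+L_1(X)$ to write it as a combination of $E_0\hat{u}_0$ and $E_1\hat{u}_0$, and compare coefficients of the $A_i\hat{u}_0$. Your write-up merely makes explicit the step (the identification $L_1(X)\cap W_0=\mathbb{C}E_1\hat{u}_0$) that the paper leaves implicit.
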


\begin{proof}
By \eqref{Hom in terms of T} and since $A\hat{u}_0\in E_1^*\mathbb{C}^X$, the vector
\begin{equation*}
	\left(\sum_{i=1}^d E_i^*A_{i-1} E_1^*\right) A\hat{u}_0 = \sum_{i=1}^d E_i^* A_{i-1}A\hat{u}_0 = \sum_{i=1}^d c_i A_i \hat{u}_0
\end{equation*}
belongs to $\operatorname{Hom}_1(X)\subset L_0(X)+L_1(X)$.
On the other hand, this vector is in the primary $T$-module $\operatorname{span}\{\hat{u}_0,A_1\hat{u}_0,\dots,A_d\hat{u}_0\}=\operatorname{span}\{E_0\hat{u}_0,E_1\hat{u}_0,\dots,E_d\hat{u}_0\}$.
Thus, it is written as
\begin{equation*}
	\sum_{i=1}^d c_i A_i \hat{u}_0 = \alpha E_0\hat{u}_0 + \beta E_1\hat{u}_0 = \frac{1}{|X|} \sum_{i=0}^d (\alpha+\beta\theta_i^*)A_i\hat{u}_0,
\end{equation*}
for some $\alpha,\beta\in\mathbb{C}$.
Comparing the coefficients of $\hat{u}_0$, we find $\beta=-\alpha/\theta_0^*$, and hence
\begin{equation*}
	c_i=\frac{\alpha}{|X|\theta_0^*}(\theta_0^*-\theta_i^*) \quad (i=1,2,\dots,d),
\end{equation*}
as desired.\footnote{In fact, we have $\alpha=\sum_{i=1}^dc_ik_i$.}
\end{proof}

Using this result, we now prove the following theorem:

\begin{thm}\label{Hamming}
Suppose that $\mathfrak{X}$ is formally self-dual, $P$-polynomial (and $Q$-polynomial), and satisfies $\operatorname{Hom}_0 (X)+\operatorname{Hom}_1(X)=L_0 (X)+L_1(X)$.
If $d\geq 6$, then $\mathfrak{X}$ is the Hamming scheme $H(d,q)$ for some $q$.
\end{thm}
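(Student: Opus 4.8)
The plan is to exploit the strong structural constraint on the intersection numbers coming from Proposition~\ref{criterion-Hom-L}, namely that $c_i/(\theta_i^*-\theta_0^*)$ is a constant, say $\gamma$, independent of $i$. Since $\mathfrak{X}$ is formally self-dual, we may identify $\theta_i^*$ with $\theta_i$ and $c_i^*$ with $c_i$, and the dual eigenvalue sequence $\theta_0^*,\theta_1^*,\dots,\theta_d^*$ is forced to be one of the standard forms dictated by the classification of $Q$-polynomial (equivalently $P$-polynomial) parameter arrays; cf.\ Leonard's theorem and the work of Terwilliger and others (e.g.\ Bannai--Ito, Brouwer--Cohen--Neumaier). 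So the first step is to write down the finitely many families of admissible $\theta_i^*$ and the corresponding $c_i$, and then impose the linear relation $c_i=\gamma(\theta_i^*-\theta_0^*)$ for all $i=1,\dots,d$.

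Next I would go family by family. Recall that $c_i$ is always strictly increasing in $i$ with $c_1=1$, so $\gamma(\theta_i^*-\theta_0^*)$ must be a strictly increasing positive sequence with first term $1$; this already eliminates the families where $\theta_i^*-\theta_0^*$ is not monotone or grows in the wrong way. For the $q$-exponential families (dual polar, Grassmann/$q$-Johnson, twisted, etc.)\ the differences $\theta_i^*-\theta_0^*$ are geometric-like, of the form (const)$(1-q^{-i})$ or (const)$(q^i-1)$, whereas $c_i$ for those schemes is $(q^i-1)/(q-1)$ or a quadratic-type expression; matching these forces $q=1$ (degenerate) or, in the Hamming-type case, exactly the relation $c_i=i$. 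Carrying this out carefully, the only surviving possibility with $d\ge 6$ (so that enough equations are available to pin down all free parameters, and sporadic small-diameter coincidences are excluded) is the linear case $c_i=i$ together with the dual eigenvalues $\theta_i^*=\theta_0^*(1-2i/n)$ for a suitable integer parameter, i.e.\ the Hamming parameter array.

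Once the parameter array is shown to coincide with that of $H(d,q)$, I would invoke the known characterization of Hamming schemes by their parameters: a $P$-polynomial association scheme with the intersection array of $H(d,q)$ is a Hamming scheme provided $d$ is large enough — this is the Egawa/Enomoto--Mena/Terwilliger-type uniqueness theorem (see Brouwer--Cohen--Neumaier, \S9.2, where for $q\ge 3$ the scheme with Hamming parameters and $d\ge 3$ is $H(d,q)$, and for $q=2$ one needs $d\ge 6$ to rule out the folded-cube-type and other exceptions). Combining with formal self-duality, which for Hamming parameters is automatic, this yields $\mathfrak{X}\cong H(d,q)$.

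The main obstacle I expect is the bookkeeping in the second step: one must handle every admissible $Q$-polynomial parameter family, including the ones with quadratic dual eigenvalues ($J(v,d)$, $J_q$, Hamming itself) and the Bannai--Ito / Krein-type families, and show in each that the constraint $c_i=\gamma(\theta_i^*-\theta_0^*)$ for all $i$ is incompatible with formal self-duality unless we are in the Hamming case. It is plausible the authors streamline this by using formal self-duality early to collapse the parameter array to very few parameters (essentially $d$ and one more), so that the single relation $c_i=\gamma(\theta_i^*-\theta_0^*)$ becomes a system of polynomial equations in those parameters that is easy to solve; isolating $c_2,c_3$ and, say, $c_6$ and eliminating $\gamma$ should already force the linear growth, with the hypothesis $d\ge 6$ being exactly what guarantees enough independent equations and excludes the low-diameter sporadic self-dual schemes.
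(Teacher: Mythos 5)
Your first two steps mirror the paper: Proposition~\ref{criterion-Hom-L} gives $c_i=\gamma(\theta_i^*-\theta_0^*)$, and formal self-duality reduces the parameter array to the few self-dual Leonard-type families (in the paper: Cases (I) with $s=s^*\ne 0$ and $s=s^*=0$, (II) with $s=s^*$, (IIC), and (III) with $s=s^*$), which are then eliminated one by one; the hypothesis $d\geq 6$ enters there, because the constancy of $c_i/(\theta_i^*-\theta_0^*)$ becomes a polynomial identity in $q^i$ (degree $5$ versus degree $4$) that can only be contradicted when it is forced to hold at six or more values of $i$ --- not, as you suggest, to invoke a graph-uniqueness theorem at the end.

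The genuine gap is in your last step. Once the parameters are pinned down to the Krawtchouk (Case (IIC)) family, you appeal to a characterization ``a scheme with the intersection array of $H(d,q)$ is $H(d,q)$ for $q\geq 3$, $d\geq 3$.'' That statement is false for $q=4$: the Doob schemes have exactly the intersection array, eigenvalues, and formal self-duality of $H(d,4)$ for every diameter $d\geq 2$, so no bound on $d$ and no amount of parameter bookkeeping can exclude them; Egawa's theorem (the one the paper actually cites) only says that a scheme with Hamming parameters is Hamming \emph{or Doob}. To finish, you must return to the hypothesis $\mathrm{Hom}_0(X)+\mathrm{Hom}_1(X)=L_0(X)+L_1(X)$ and show a Doob scheme violates it --- your plan never uses this hypothesis after Proposition~\ref{criterion-Hom-L}. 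The paper does this with a Terwilliger-algebra argument: the local graph of a Doob graph is a disjoint union of hexagons and $3$-cliques, hence has eigenvalue $-2=-1-b_1/(1+\theta_1)$, so by Go--Terwilliger there is a thin irreducible $T$-module $W$ with $\rho(W)=1$ and $\rho^*(W)=2$; applying $\sum_{i=1}^d E_i^*A_{i-1}E_1^*$ to a nonzero vector of $E_1^*W$ produces a nonzero element of $\mathrm{Hom}_1(X)$ lying in $L_2(X)+\cdots+L_d(X)$, contradicting the hypothesis. Without such a structural (non-parametric) argument for the Doob case, the proof is incomplete.
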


\begin{proof}
Since $\mathfrak{X}$ is formally self-dual, in the notation of \cite[Section 3.5]{BI1984B} and \cite[Section 2]{Terwilliger1992JAC}, the parameters of $\mathfrak{X}$ satisfy one of the following cases\footnote{In the terminology of \cite{Terwilliger2005DCC}, these are of $q$-Racah, affine $q$-Krawtchouk, Racah, Krawtchouk and Bannai/Ito types, respectively.}: (I) with $s=s^*\ne 0$; (I) with $s=s^*=0$; (II) with $s=s^*$; (IIC); and (III) with $s=s^*$.

First, consider Case (I) with $s=s^*\ne 0$.
Then, it follows that
\begin{equation*}
	\frac{c_i}{\theta_i^*-\theta_0^*}=\frac{q^i(1-sq^{i+d+1})(r_1-sq^i)(r_2-sq^i)}{sq^d(1-sq^{i+1})(1-sq^{2i})(1-sq^{2i+1})} \quad (i=1,2,\dots,d-1),
\end{equation*}
and this is independent of $i$ by Proposition \ref{criterion-Hom-L}, so that
\begin{equation*}
	sq^d(1-sq^{i+1})(1-sq^{2i})(1-sq^{2i+1}) = (\theta_1^*-\theta_0^*)q^i(1-sq^{i+d+1})(r_1-sq^i)(r_2-sq^i)
\end{equation*}
for $i=1,2,\dots,d-1$, and this identity is valid for $i=d$ as well.
However, as polynomials in $q^i$, the left hand side is of degree five, whereas the right hand side is of degree four.
Since $d\geq 6$, this is impossible.
Case (I) with $s=s^*=0$ is ruled out in the same way.

Next, consider Case (II) with $s=s^*$.
Then, it follows that
\begin{equation*}
	\frac{c_i}{\theta_i^*-\theta_0^*}=\frac{(i+s+d+1)(i+s-r_1)(i+s-r_2)}{(i+1+s)(2i+1+s)(2i+s)} \quad (i=1,2,\dots,d-1).
\end{equation*}
Again, as polynomials in $i$, the denominator must be a scalar multiple of the numerator.
In particular, they have the same roots.
Since $1+s\ne s+d+1$, we may assume that $1+s=s-r_1$, i.e., $r_1=-1$.
Then, since $r_1+r_2=s+s^*+d+1$, we have $r_2=2s+d+2$.
Using this and $\{s+d+1,s-r_2\}=\{(1+s)/2,s/2\}$, it follows that $d=\pm 1/4$, which is absurd.

If $\mathfrak{X}$ satisfies Case (III) with $s=s^*$, then by the classification due to Terwilliger \cite{Terwilliger1987JCTB}, it follows that $\mathfrak{X}$ is isomorphic to $H(d,2)$ ($d$ even) or the bipartite half of $H(2d+1,2)$, but with respect to the second $P$-polynomial orderings.\footnote{The second $P$-polynomial ordering of the Johnson scheme $J(2d+1,d)$ (corresponding to the Odd graph $O_{d+1}$) satisfies Case (III), but with $s=2d+3$ and $s^*=2d+2$.}
We have $c_i=i$ $(i=1,2,\dots,d)$ in either case, and it follows that $c_i/(\theta_i^*-\theta_0^*)$ cannot be constant, since $\theta_0^*,\theta_1^*,\dots,\theta_d^*$ are not an arithmetic progression.

Thus, we are left with Case (IIC).
In this case, by the classification due to Egawa \cite{Egawa1981JCTA}, $\mathfrak{X}$ is a Hamming scheme or a Doob scheme.
If $\mathfrak{X}$ is a Hamming scheme, then we are done.
Thus, suppose that $\mathfrak{X}$ is a Doob scheme.
Then, there is a thin irreducible $T$-module $W$ with $\rho(W)=1$, $\rho^*(W)=2$, and $\delta(W)=d-2$.
This fact follows from Tanabe's description \cite{Tanabe1997JAC} of the irreducible $T$-modules of Doob schemes, but we may also prove it as follows.
The local graph of the Doob graph $(X,R_1)$ (whose adjacency matrix is essentially $E_1^*AE_1^*$) is a disjoint union of hexagons and $3$-cliques, so that it has $-2$ as an eigenvalue.
On the other hand, we have $-1-b_1/(1+\theta_1)=-2$, where $b_1=p_{12}^1$.
Thus, by \cite[Theorem 9.8]{GT2002EJC}, any eigenvector (in $E_1^*\mathbb{C}^X$) of $E_1^*AE_1^*$ with eigenvalue $-2$ generates such a $T$-module.
Now, let $v$ be a nonzero vector in $E_1^*W$.
Then, $\big(\sum_{i=1}^dE_i^*A_{i-1} E_1^*\big)v$ is nonzero and belongs to $\operatorname{Hom}_1(X)$.
However, since $\rho^*(W)=2$, it is contained in $L_2(X)+L_3(X)+\dots+L_d(X)$.
Thus, we conclude that $\operatorname{Hom}_0(X)+\operatorname{Hom}_1(X)\ne L_0(X)+L_1(X)$, and the proof is complete.
\end{proof}

\appendix

\section{Comments on Theorem \ref{Fisher-P}} \label{sec: dual-polar}

In this appendix, we generalize Theorem \ref{Fisher-P} to dual polar schemes (Theorem \ref{a6}).
Suppose that $\mathfrak{X}$ is a dual polar scheme, so that $X$ is the set of maximal isotropic subspaces of a vector space $V$ over a finite field, equipped with a non-degenerate form (alternating, Hermitian, or quadratic) of Witt index $d$.
For convenience, we shall work with the dual polar graph $(X,R_1)$ with path-length distance $\partial$.

\begin{lem}\label{a1}
Let $x,y,z\in X$.
Then, $\partial(x,z)+\partial(z,y)=\partial(x,y)$ if and only if $x\cap y\subseteq z=(x\cap z)+(y\cap z)$.
\end{lem}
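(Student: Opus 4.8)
The plan is to work entirely inside the lattice of isotropic subspaces of $V$, using the two standard facts about dual polar graphs: first, that for $x,y\in X$ the distance $\partial(x,y)$ equals $d-\dim(x\cap y)$; and second, that the dual polar graph is a near polygon, so that geodesics correspond to chains of subspaces. I would phrase everything in terms of the dimension $\dim(x\cap y)$, which converts the geodesic condition $\partial(x,z)+\partial(z,y)=\partial(x,y)$ into the numerical identity
\begin{equation*}
	\bigl(d-\dim(x\cap z)\bigr)+\bigl(d-\dim(z\cap y)\bigr)=d-\dim(x\cap y),
\end{equation*}
i.e. $\dim(x\cap z)+\dim(z\cap y)=d+\dim(x\cap y)$.

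For the forward direction I would first show $x\cap y\subseteq z$. Since $x$, $y$, $z$ are all maximal isotropic of dimension $d$, the modular law gives $\dim\bigl((x\cap z)+(y\cap z)\bigr)=\dim(x\cap z)+\dim(y\cap z)-\dim(x\cap y\cap z)$, and combining this with the numerical identity above forces $\dim(x\cap y\cap z)=\dim(x\cap y)$, hence $x\cap y\subseteq z$ and moreover $\dim\bigl((x\cap z)+(y\cap z)\bigr)=d$, so $(x\cap z)+(y\cap z)=z$ because the left side is contained in $z$ and has full dimension $d$. The equality $x\cap y\subseteq z=(x\cap z)+(y\cap z)$ then follows at once (note $x\cap y\subseteq x\cap z$ automatically once $x\cap y\subseteq z$). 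For the converse, assume $x\cap y\subseteq z=(x\cap z)+(y\cap z)$; then $x\cap y\cap z=x\cap y$, so $\dim(x\cap y\cap z)=\dim(x\cap y)$, and $\dim z=\dim\bigl((x\cap z)+(y\cap z)\bigr)=\dim(x\cap z)+\dim(y\cap z)-\dim(x\cap y)$ gives $\dim(x\cap z)+\dim(y\cap z)=d+\dim(x\cap y)$, which is exactly the numerical identity equivalent to $\partial(x,z)+\partial(z,y)=\partial(x,y)$.

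The step I expect to require the most care is justifying the translation $\partial(x,y)=d-\dim(x\cap y)$ and the interpretation of geodesics: strictly this is the statement that in a dual polar graph one can always descend from $x$ to $x\cap y$ and ascend to $y$ one dimension at a time through maximal isotropic subspaces, i.e. that the near-polygon/geodesic structure is governed by the intersection dimension. Since we are permitted to use standard facts about dual polar spaces (and the lemma is being set up precisely to recover a $P$-polynomial analogue of $f_z$ for these schemes), I would cite the relevant properties from \cite{BCN1989B} rather than reprove them, and devote the written proof to the clean lattice-theoretic manipulation with the modular law sketched above.
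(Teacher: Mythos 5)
Your proposal is correct and follows essentially the same route as the paper: the paper's one-line proof is the inequality chain $\dim(x\cap z)+\dim(y\cap z)\leq d+\dim(x\cap y\cap z)\leq d+\dim(x\cap y)$, which is exactly your dimension-count via $\partial(x,y)=d-\dim(x\cap y)$ and the formula $\dim\bigl((x\cap z)+(y\cap z)\bigr)=\dim(x\cap z)+\dim(y\cap z)-\dim(x\cap y\cap z)$ inside $z$, with equality analysis giving both directions.
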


\begin{proof}
Immediate from $\dim (x\cap z)+\dim (y\cap z)\leq d+\dim (x\cap y\cap z)\leq d+\dim (x\cap y)$.
\end{proof}

For the moment, fix $x,y\in X$ and write $i=\partial(u_0,x)$, $j=\partial(u_0,y)$, $h=\partial(x,y)$, and $\ell=\dim (u_0\cap U)$, where $U=x\cap y$.
We note that $\ell\geq d-i-j$.
Our goal is to show that $f_xf_y\in\operatorname{Hom}_{d-\ell}(X)$.
We set $X'=\{z\in X\, |\, U\subseteq z \}$, and observe that $X'$ induces a dual polar graph with diameter $h$.

\begin{lem}\label{a2}
For every $z\in X$, there is a unique $z'\in X'$ such that $\partial(z,z')=\partial(z,X')$.
Moreover, it holds that $\partial(z,z_1)=\partial(z,z')+\partial(z',z_1)$ for all $z_1\in X'$.
\end{lem}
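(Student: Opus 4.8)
The plan is to write down the gate explicitly and then let Lemma~\ref{a1} do the work. Recall that $\partial(z,w)=d-\dim(z\cap w)$ for all $z,w\in X$, and that a totally isotropic subspace of $V$ has dimension at most $d$. Given $z\in X$, I would take
\[
z'=(z\cap U^{\perp})+U ,
\]
and the first task is to verify that $z'\in X'$. By construction $U\subseteq z'$. Moreover $z'$ is totally isotropic: both $U$ and $z\cap U^{\perp}$ are totally isotropic (the latter as a subspace of $z$), and they are mutually orthogonal because $z\cap U^{\perp}\subseteq U^{\perp}$. To see that $\dim z'=d$, observe that the form induces a pairing of $z$ against $U$ whose left radical is $z\cap U^{\perp}$ and whose right radical is $U\cap z^{\perp}$; comparing ranks gives $\dim z-\dim(z\cap U^{\perp})=\dim U-\dim(U\cap z^{\perp})$, so that, using $z\cap U\subseteq z^{\perp}\cap U$,
\[
\dim z'=\dim(z\cap U^{\perp})+\dim U-\dim(z\cap U)=d+\dim(U\cap z^{\perp})-\dim(z\cap U)\ge d .
\]
Since also $\dim z'\le d$, we get $\dim z'=d$, hence $z'\in X'$. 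Note in passing that $z'\subseteq U^{\perp}$ (being totally isotropic and containing $U$), so $z\cap z'=z\cap U^{\perp}$.

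The ``moreover'' clause is the crux, and it follows at once from Lemma~\ref{a1}. Let $z_{1}\in X'$. Then $z_{1}\subseteq U^{\perp}$, hence $z\cap z_{1}\subseteq z\cap U^{\perp}\subseteq z'$; and since $U\subseteq z_{1}\cap z'$ we have $(z\cap z')+(z_{1}\cap z')\supseteq(z\cap U^{\perp})+U=z'$, so $(z\cap z')+(z_{1}\cap z')=z'$. By Lemma~\ref{a1} applied to the triple $z,z',z_{1}$, this is exactly the assertion $\partial(z,z')+\partial(z',z_{1})=\partial(z,z_{1})$.

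Existence and uniqueness of the closest point then follow formally. The identity just established gives $\partial(z,z_{1})\ge\partial(z,z')$ for every $z_{1}\in X'$, with equality when $z_{1}=z'$, so $\partial(z,X')=\partial(z,z')$; and if $z''\in X'$ also satisfies $\partial(z,z'')=\partial(z,X')$, then $\partial(z,z'')=\partial(z,z')+\partial(z',z'')$ forces $\partial(z',z'')=0$, that is, $z''=z'$. I expect the only step needing any thought to be the dimension count $\dim z'=d$; everything else is bookkeeping organized around Lemma~\ref{a1}.
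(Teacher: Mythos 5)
Your proof is correct and follows essentially the same route as the paper: the same gate $z'=U+(z\cap U^{\perp})$, the same verification that $z'=(z\cap z')+(z_1\cap z')$ and $z\cap z_1\subseteq z'$, and the same appeal to Lemma~\ref{a1}. You merely supply details the paper leaves implicit (the check that $z'\in X'$ via the dimension count, and the formal deduction of existence and uniqueness), which is fine.
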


\begin{proof}
Set $z'=U+(z\cap U^{\perp})\in X'$.
Pick any $z_1\in X'$.
Then, $z'=(z\cap z')+(z_1\cap z')$ since $U\subseteq z_1$ and $z\cap U^{\perp}\subseteq z$.
Moreover, $z\cap z_1\subseteq z\cap U^{\perp}\subseteq z'$.
Thus, $\partial(z,z_1)=\partial(z,z')+\partial(z',z_1)$ by Lemma \ref{a1}, and the result follows.
\end{proof}

\begin{lem}\label{a3}
Suppose that $z\in X'$ satisfies $f_x(z)=f_y(z)=1$.
Then, $\partial(u_0,z)=d-\ell$.
\end{lem}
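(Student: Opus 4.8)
The plan is to convert each of the two hypotheses $f_x(z)=1$ and $f_y(z)=1$ into a subspace inclusion by means of Lemma \ref{a1}, and then to combine them. Recall first that for $a,b\in X$ one has $\partial(a,b)=d-\dim(a\cap b)$; this is the same fact that underlies the labeling of the shells $X_r$. In particular, since $z\in X'$ gives $U\subseteq z$ and hence $u_0\cap U\subseteq u_0\cap z$, proving $\partial(u_0,z)=d-\ell$ is equivalent to establishing the reverse inclusion $u_0\cap z\subseteq u_0\cap U$, i.e.\ $u_0\cap z\subseteq U$.

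Next I would unpack the condition $f_x(z)=1$. By the definition of $f_x$ this says precisely that $x$ lies on a geodesic between $u_0$ and $z$, that is, $\partial(u_0,x)+\partial(x,z)=\partial(u_0,z)$. Applying Lemma \ref{a1} to this equality, with $x$ playing the role of the ``middle'' vertex, yields $u_0\cap z\subseteq x$. By the identical argument applied to the hypothesis $f_y(z)=1$, we obtain $u_0\cap z\subseteq y$. Intersecting the two inclusions gives $u_0\cap z\subseteq x\cap y=U$, which is exactly what was needed; hence $u_0\cap z=u_0\cap U$, so $\dim(u_0\cap z)=\ell$ and $\partial(u_0,z)=d-\ell$.

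I do not anticipate a genuine obstacle here: once Lemma \ref{a1} is in hand, the argument is a single application of it to each of the two geodesic conditions, followed by a one-line intersection of subspaces. The only points that call for care are keeping track of which vertex plays the middle role when invoking Lemma \ref{a1}, and making explicit the standard identity $\partial(a,b)=d-\dim(a\cap b)$ in the dual polar graph, which lets one pass freely between statements about $\partial(u_0,z)$ and statements about $\dim(u_0\cap z)$.
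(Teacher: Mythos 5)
Your proof is correct and follows essentially the same route as the paper's: both apply Lemma \ref{a1} to the geodesic conditions $\partial(u_0,x)+\partial(x,z)=\partial(u_0,z)$ and $\partial(u_0,y)+\partial(y,z)=\partial(u_0,z)$ to get $u_0\cap z\subseteq x$ and $u_0\cap z\subseteq y$, then intersect and use $U\subseteq z$ for the reverse inclusion. Your explicit mention of the identity $\partial(a,b)=d-\dim(a\cap b)$ merely makes transparent what the paper leaves implicit.
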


\begin{proof}
First, $u_0\cap z\subseteq x$ by Lemma \ref{a1} and since $\partial(u_0,x)+\partial(x,z)=\partial(u_0,z)$.
Likewise, $u_0\cap z\subseteq y$.
Thus, $u_0\cap z\subseteq u_0\cap U$.
On the other hand, $u_0\cap U\subseteq u_0\cap z$ since $z\in X'$.
It follows that $u_0\cap z=u_0\cap U$, as desired.
\end{proof}

\begin{lem}\label{a4}
For every $z\in X$ such that $f_x(z)=f_y(z)=1$, there is a unique $z'\in X'$ such that $f_x(z')=f_y(z')=1$ and $f_{z'}(z)=1$.
\end{lem}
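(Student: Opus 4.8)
The plan is to verify that the ``gate'' $z':=U+(z\cap U^{\perp})$ of $z$ in $X'$ furnished by Lemma \ref{a2} is exactly the vertex we want, and that it is the only one. A preliminary observation is that $x,y\in X'$, since $U=x\cap y$ is contained in each of $x$ and $y$; in particular the distances $\partial(z',x)$ and $\partial(z',y)$ will be governed by Lemma \ref{a2}.

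For existence I would argue as follows. By Lemma \ref{a2} with $z_1=x\in X'$ we have $\partial(x,z)=\partial(x,z')+\partial(z',z)$, and substituting this into the identity $\partial(u_0,x)+\partial(x,z)=\partial(u_0,z)$ (which is just the hypothesis $f_x(z)=1$) gives
\begin{equation*}
	\partial(u_0,x)+\partial(x,z')+\partial(z',z)=\partial(u_0,z).
\end{equation*}
Now sandwich this between the triangle inequalities $\partial(u_0,z')\le\partial(u_0,x)+\partial(x,z')$ and $\partial(u_0,z)\le\partial(u_0,z')+\partial(z',z)$: the two ends agree, so both inequalities are in fact equalities. The first equality says $f_x(z')=1$ and the second says $f_{z'}(z)=1$. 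Running the same argument with $y$ in place of $x$ (again using Lemma \ref{a2}, now with $z_1=y$) yields $f_y(z')=1$. Thus $z'$ has all three required properties.

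For uniqueness, let $z''\in X'$ be any vertex with $f_x(z'')=f_y(z'')=1$ and $f_{z''}(z)=1$. Since $z''\in X'$ and $f_x(z'')=f_y(z'')=1$, Lemma \ref{a3} gives $\partial(u_0,z'')=d-\ell$; the same lemma applied to $z'$ gives $\partial(u_0,z')=d-\ell$ as well. Feeding these into $f_{z''}(z)=1$ and $f_{z'}(z)=1$ yields $\partial(z'',z)=\partial(u_0,z)-(d-\ell)=\partial(z',z)$, and by Lemma \ref{a2} this common value equals $\partial(z,X')$. Hence $z''$ is a point of $X'$ at minimal distance from $z$, so the uniqueness clause of Lemma \ref{a2} forces $z''=z'$.

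I do not expect a serious obstacle here: once one recognizes that the single gate $z'$ must serve simultaneously as an intermediate vertex on a geodesic from $u_0$ through $x$ and on one through $y$, both halves reduce to Lemmas \ref{a1}--\ref{a3} with no extra computation. The only point needing a little care is keeping straight which of the three conditions $f_x(z')=1$, $f_y(z')=1$, $f_{z'}(z)=1$ corresponds to which distance identity, and checking that the triangle-inequality sandwich genuinely collapses to equalities.
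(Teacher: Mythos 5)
Your proof is correct and follows essentially the same route as the paper: take the gate $z'=U+(z\cap U^{\perp})$ from Lemma \ref{a2}, and get uniqueness by combining Lemma \ref{a3} (both candidates lie at distance $d-\ell$ from $u_0$, hence at equal distance from $z$) with the uniqueness clause of Lemma \ref{a2}. Your triangle-inequality sandwich for the existence part is a valid and slightly more explicit verification of the properties $f_x(z')=f_y(z')=1$ and $f_{z'}(z)=1$, which the paper simply asserts.
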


\begin{proof}
Let $z'(=U+(z\cap U^{\perp}))$ be as in Lemma \ref{a2}.
Then, $f_x(z')=f_y(z')=1$ and $f_{z'}(z)=1$.
To show the uniqueness, suppose that $z_1\in X'$ satisfies $f_x(z_1)=f_y(z_1)=1$ and $f_{z_1}(z)=1$.
Then, it follows from Lemma \ref{a3} that $\partial(u_0,z')=\partial(u_0,z_1)=d-\ell$, so that $\partial(z,z')=\partial(z,z_1)$.
But then, we must have $z'=z_1$ by Lemma \ref{a2}, and the proof is complete.
\end{proof}

\begin{prop}\label{a5}
With the above notation, it holds that
\begin{equation*}
	f_xf_y=\sum_{\substack{z\in X' \\ f_x(z)=f_y(z)=1}} f_z \in\operatorname{Hom}_{d-\ell}(X).
\end{equation*}
\end{prop}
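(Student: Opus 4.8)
The plan is to prove the set-theoretic identity $f_xf_y=\sum_{z\in X',\,f_x(z)=f_y(z)=1}f_z$ pointwise, and then deduce membership in $\mathrm{Hom}_{d-\ell}(X)$ from Lemma \ref{a3}. First I would fix an arbitrary $w\in X$ and evaluate both sides at $w$. The left-hand side is $1$ precisely when $f_x(w)=f_y(w)=1$, i.e.\ when both $x$ and $y$ lie on geodesics from $u_0$ to $w$; otherwise it is $0$. For the right-hand side, $f_z(w)=1$ means $z$ lies on a geodesic from $u_0$ to $w$, so $\sum_{z}f_z(w)$ counts the number of $z\in X'$ with $f_x(z)=f_y(z)=1$ and $f_z(w)=1$.

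The key step is to show this count equals the value of $f_xf_y$ at $w$. If $f_x(w)=0$ or $f_y(w)=0$, then there can be no $z\in X'$ with $f_x(z)=f_y(z)=1$ and $f_z(w)=1$: from $f_z(w)=1$ and $f_x(z)=1$ one gets, via Lemma \ref{a1} applied twice to the chains $u_0,x,z$ and $u_0,z,w$, that $\partial(u_0,x)+\partial(x,w)=\partial(u_0,w)$, i.e.\ $f_x(w)=1$, and similarly $f_y(w)=1$ — a contradiction; hence both sides vanish. If instead $f_x(w)=f_y(w)=1$, then Lemma \ref{a4} furnishes exactly one $z'\in X'$ with $f_x(z')=f_y(z')=1$ and $f_{z'}(w)=1$, so the count is $1$, matching the left-hand side. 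This establishes the identity of functions.

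Finally, for the membership claim: by definition $\mathrm{Hom}_{d-\ell}(X)=\mathrm{span}\{f_z\mid z\in X_{d-\ell}\}$, so it suffices to check that every $z$ appearing in the sum satisfies $\partial(u_0,z)=d-\ell$, and this is exactly the content of Lemma \ref{a3} (every such $z$ lies in $X'$ and has $f_x(z)=f_y(z)=1$). Hence $f_xf_y\in\mathrm{Hom}_{d-\ell}(X)$.

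I expect the main obstacle to be the first half of the key step — verifying carefully that $f_z(w)=1$ together with $f_x(z)=1$ forces $f_x(w)=1$ — since this requires composing the geodesic characterization of Lemma \ref{a1} along the chain $u_0,x,z,w$ and checking that betweenness is transitive in the required sense; everything else is a direct appeal to Lemmas \ref{a2}, \ref{a3}, and \ref{a4}.
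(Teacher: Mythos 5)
Your proof is correct and follows essentially the same route as the paper, which simply declares the identity ``immediate from Lemmas \ref{a3} and \ref{a4}''; you fill in the pointwise verification, including the vanishing case, which indeed needs the observation that $f_x(z)=1$ and $f_z(w)=1$ force $f_x(w)=1$ (this is just transitivity of metric betweenness via the triangle inequality, so you do not even need Lemma \ref{a1} for it). The appeal to Lemma \ref{a4} for the count and to Lemma \ref{a3} for membership in $\mathrm{Hom}_{d-\ell}(X)$ matches the paper's intent exactly.
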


\begin{proof}
Immediate from Lemmas \ref{a3} and \ref{a4}.
\end{proof}

\begin{thm}\label{a6}
Theorem \ref{Fisher-P} is valid for dual polar schemes.
\end{thm}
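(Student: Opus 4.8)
The plan is to mimic the argument of Delsarte and Seidel for $H(d,2)$, using Proposition \ref{a5} as the multiplicative tool that replaces the straightforward fact (available in the Hamming case) that a product $f_xf_y$ lies in a single $\mathrm{Hom}_\bullet$-space. Let $(Y,w)$ be a relative $2e$-design of the dual polar scheme $\mathfrak{X}$ with respect to $u_0$, supported by $S=X_{r_1}\cup\cdots\cup X_{r_p}$. As in the standard Fisher-type proof, I would consider the linear map
\begin{equation*}
	\Phi\colon \mathrm{Hom}_0(S)+\mathrm{Hom}_1(S)+\cdots+\mathrm{Hom}_e(S)\longrightarrow \mathbb{C}^Y,\qquad f\longmapsto \big(\sqrt{w(y)}\,f(y)\big)_{y\in Y},
\end{equation*}
and show that $\Phi$ is injective; this immediately yields $|Y|\geq\dim\big(\mathrm{Hom}_0(S)+\cdots+\mathrm{Hom}_e(S)\big)$, which is the desired inequality \eqref{Fisher-P-Hom}.

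First I would record the key bilinear consequence of the design condition. If $f,g\in\mathrm{Hom}_0(X)+\cdots+\mathrm{Hom}_e(X)$, then $fg$ is a linear combination of products $f_zf_{z'}$ with $z\in X_a$, $z'\in X_b$, $a,b\leq e$; by Proposition \ref{a5} each such product lies in $\mathrm{Hom}_{d-\ell}(X)$ for an appropriate $\ell\geq d-a-b\geq d-2e$, hence in $\mathrm{Hom}_0(X)+\cdots+\mathrm{Hom}_{2e}(X)$. Therefore $fg$ belongs to the space appearing in Definition \ref{design-P} for $t=2e$, and so
\begin{equation*}
	\sum_{y\in Y}w(y)f(y)g(y)=\sum_{i=1}^p\frac{w(Y_{r_i})}{k_{r_i}}\sum_{x\in X_{r_i}}f(x)g(x).
\end{equation*}
The right-hand side depends on $f,g$ only through their restrictions $f|_S$, $g|_S$, and it is a positive-definite Hermitian form there, because $w(Y_{r_i})>0$ for each $i$ and the shells $X_{r_i}$ are precisely the ones meeting $Y$: a function in $\mathrm{Hom}_0(S)+\cdots+\mathrm{Hom}_e(S)$ that is orthogonal to itself under this form must vanish on every $X_{r_i}$, hence be the zero element of $R(S)$. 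Consequently, if $f$ lies in the domain of $\Phi$ and $\Phi(f)=0$, then $\sum_{y}w(y)f(y)^2=0$ forces $f|_S=0$, so $\Phi$ is injective.

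The one point that needs care — and the main obstacle — is bookkeeping with the restriction-to-$S$ map: the elements of $\mathrm{Hom}_j(S)$ are genuinely restrictions $f|_S$ rather than functions on $X$, so to apply the product argument I must lift representatives back to $\mathrm{Hom}_j(X)$, form the product there, and then observe that only the values on $S$ (indeed on the $X_{r_i}$) enter the identity above; I should check that this is well defined, i.e. independent of the chosen lifts, which follows because the identity in Definition \ref{design-P} for $f\in\mathrm{Hom}_0(X)+\cdots+\mathrm{Hom}_{2e}(X)$ only involves values on the supporting shells. Granting this, the argument is complete and Theorem \ref{a6} follows. (One may alternatively invoke Proposition \ref{a5} together with the observation that $\mathrm{Hom}_0(X)+\cdots+\mathrm{Hom}_e(X)$ is closed under multiplication modulo $\mathrm{Hom}_0(X)+\cdots+\mathrm{Hom}_{2e}(X)$ to phrase the proof more algebraically, but the injectivity-of-$\Phi$ formulation above is the most direct route.)
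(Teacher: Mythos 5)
Your proposal is correct and follows essentially the same route as the paper: use Proposition \ref{a5} to see that products (in particular squares) of functions in $\mathrm{Hom}_0(X)+\cdots+\mathrm{Hom}_e(X)$ lie in $\mathrm{Hom}_0(X)+\cdots+\mathrm{Hom}_{2e}(X)$, apply the relative $2e$-design identity to $f^2$, use positivity of the weights to conclude that the restriction map to $\mathcal{F}(Y)$ is injective, and compare dimensions. The extra bookkeeping about lifting from $S$ back to $X$ is harmless but not needed, since one can simply start from $f$ defined on $X$ with $f|_Y\equiv 0$, exactly as the paper does.
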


\begin{proof}
Suppose that $f\in \operatorname{Hom}_0(X)+\operatorname{Hom}_1(X)+\dots+\operatorname{Hom}_e(X)$ satisfies $f|_Y\equiv 0$.
Then, $f^2\in \operatorname{Hom}_0(X)+\operatorname{Hom}_1(X)+\dots+\operatorname{Hom}_{2e}(X)$ by Proposition \ref{a5}.
Thus,
\begin{equation*}
	\sum_{i=1}^p\frac{w(Y_{r_i})}{k_{r_i}}\sum_{x\in X_{r_i}}(f(x))^2=\sum_{y\in Y}w(y)(f(y))^2=0,
\end{equation*}
from which it follows that the restriction map $\operatorname{Hom}_0(S)+\operatorname{Hom}_1(S)+\dots+\operatorname{Hom}_e(S)\rightarrow \mathcal{F}(Y)$ ($f|_S\mapsto f|_Y$) is injective, and the result follows by comparing the dimensions.
\end{proof}

\section{Comments on Proposition \ref{Hom-L}} \label{sec: Hom-L}

We use the notation in the proof of Proposition \ref{Hom-L}.
We mentioned there that the function $f_z$ belongs to $\sum_{i=0}^jL_i(X)$.
While this fact is just a special case of a more general result about regular semilattices \cite{Delsarte1976JCTA,Delsarte1977PRR,Stanton1985JCTA}, we now provide an independent proof.

We identify $\{0,1,\dots,q-1\}$ with the additive group $\mathbb{Z}/q\mathbb{Z}$.
Let $\zeta\in\mathbb{C}$ be a primitive $q^{\mathrm{th}}$ root of unity.
Then, the \emph{additive group} $X$ and its dual group $X^*$ are isomorphic, and an isomorphism is given by $x=(x_1,x_2,\dots,x_d)\mapsto \varepsilon_x$, where $\varepsilon_x(y)=\zeta^{\sum_{\ell=1}^dx_{\ell}y_{\ell}}$ for every $y=(y_1,y_2,\dots,y_d)\in X$.
In fact, it is well known (and is easily checked) that $L_i(X)=\operatorname{span}\{\varepsilon_x\, |\, x\in X_i\}$ (over $\mathbb{C}$) for $i=0,1,\dots,d$, i.e., $H(d,q)$ is self-dual.

Assume that $i>j$, and pick any $y=(y_1,y_2,\dots,y_d)\in X_i$.
Then, the (standard) Hermitian inner product between $\varepsilon_y$ and $f_z$ is given by
\begin{equation*}
	\left(\prod_{h=1}^j\zeta^{z_{\ell_h}y_{\ell_h}}\right) \left(\prod_{\ell\ne \ell_1,\dots,\ell_j}\left(\sum_{x_{\ell}=0}^{q-1}\zeta^{x_{\ell}y_{\ell}}\right)\right).
\end{equation*}
Since $i>j$, there is an $\ell\ne \ell_1,\dots,\ell_j$ such that $y_{\ell}\ne 0$.
For this $\ell$, we have $\sum_{x_{\ell}=0}^{q-1}\zeta^{x_{\ell}y_{\ell}}=0$.
Thus, $f_z$ is orthogonal to $\varepsilon_y$.
It follows that $f_z$ is orthogonal to $\sum_{i=j+1}^dL_i(X)$, and hence it is contained in $\sum_{i=0}^jL_i(X)$, as desired.

\subsection*{Acknowledgements}
Eiichi Bannai is supported in part by NSFC grant No.~11271257.
Sho Suda was supported by JSPS Research Fellowships for Young Scientists.
Hajime Tanaka was supported in part by JSPS KAKENHI Grant No.~23740002 and No.~25400034.

\end{document}